\numberwithin{figure}{section}
\newtheorem{thm}{Theorem}[section]
\newtheorem{lem}[thm]{Lemma}
\newtheorem{prop}[thm]{Proposition}
\newtheorem{cor}[thm]{Corollary}
\theoremstyle{definition}
\newtheorem{exa}[thm]{Example}
\newtheorem{oss}[thm]{Remark}
\numberwithin{equation}{section}
\font\script=rsfs10 at 12pt
    \def\res{\mathop{\hbox{\vrule height 7pt width .5pt depth 0pt \vrule height .5pt width 6pt depth 0pt}}\nolimits}
\newcommand{\beq}{\begin{equation}}
\newcommand{\eeq}{\end{equation}}
\newcommand{\Div}{\operatorname{div}}
\def \He {\mbox{\script{H}}\;}
\def\R{\mathbb R}
\def\H{\mathcal H}
\def\S{\mathbb S}
\def\e{\varepsilon}
\def\pa{\partial}
\def\restrict#1{\raise-.5ex\hbox{\ensuremath|}_{#1}}
\title{Weighted}
\author{Nicola Fusco \and Domenico Angelo La Manna}
\title{A remark on a conjecture on the symmetric Gaussian Problem}
\begin{document}
\maketitle
\section{Introduction}

Let $\gamma(E)$ and $P_\gamma(E)$
be the Gaussian measure and the Gaussian perimeter of a set $E\subset \R^n$, see the definition in the next section.
A classical result states that  that among all sets of given Gaussian measure the half space is the only one minimizing the Gaussian perimeter \cite{Bo, ST}.  
This inequality has been established also in a quantitative form with different approaches (see \cite{bbj},\cite{cfmp}),
 both of independent interest. Due to its large number of applications also the nonlocal form of the Gaussian isoperimetric inequality has been studied. In particular, it is known that the half space is still a minimizer of the fractional Gaussian perimeter if we define it by means of the Stinga-Torrea extension (see \cite{NPS,CCLP}), while this is no longer the case if one defines 
the fractional perimeter by means of a singular integral with a Gaussian weight (see \cite{DeL}).
On the contrary, if one restricts to the class of symmetric sets, 
the characterization of perimeter minimizers under a volume constraint is still an open problem.
 In \cite{Barthe} it was conjectured that these minimizers would be either the ball or its complement. This conjecture has been disproved in \cite{l}, see also \cite{Heil2}, and in \cite{BaJu} it has been proven that for small values of the volume the minimizer among symmetric sets is given by a strip.
Later on, in \cite{Heil} it was shown that jf such minimizer is convex and satisfies some addictional condition, then it is a round cylinder. Finally in a very recent paper Heilman, adapting previous ideas introduced by Colding and Minicozzi in the study of the mean curvature flow, proved  
that if the minimizer of the Symmetric Gaussian Problem (SGP) is of the form
$\Omega\times \R\subset \R^{n+1}$, then either $\Omega$ or $\Omega^c$ is convex
\cite{Heil3}.

In this paper we study a problem related to the above SGP, raised in \cite{Galyna}, where it was asked whether the ball centered at the origin maximizes the energy functional
\[
\He(E)=\int_{\pa E} H_{\pa E}e^{-\frac{|x|^2}{2}} \, d\H^{n-1}
\]
among all convex, symmetric sets with fixed Gaussian volume. Here and in the following, $H_{\pa E}$ denotes the mean curvature of $E$. 
More precisely, we prove that this is true in two dimensions, even if one replaces $e^{-\frac{|x|^2}{2}}$ with more general weights on the volume and on the perimeter. Furthermore, we show that this maximality property of the disk holds in a stronger form,
in the sense that if $\gamma(E)=\gamma(B_r)$ the gap $\He(B_r)-\He(E)$ can be estimated in a quantitative way both from above and from below, see Theorem \ref{teo:2d}.
The estimate from above follows by a simple integration by parts, while the one from below follows by a  calibration argument.
\\
The situation is completely different in higher dimension. A simple example given in Section \ref{sec:highd} shows that already in dimension three one can find a symmetric cylinder $E$
such that $\gamma(E)=\gamma(B_r)$, but 
 $\He(E)>\He(B_r)$ for sufficiently small values of $r$. Even worse, in Theorem \ref{teo:localmax} we prove that if $E$ is a symmetric set sufficiently close in $W^{2,\infty}$ to the ball $B_r$ with the same Gaussian volume, then $\He(B_r)<\He(E)$ provided that $r^2< \frac{(n-2)(n-1)}{2n}$. On the other hand, if $r^2>n-2$  we are able to show that the ball is a local maximizer of $\He$ with respect to all competitors $E$ close in $W^{2,\infty}$, with the same Gaussian volume and not necessarily symmetric.  
These two local minimality and maximality results are obtained with a more or less standard second order Taylor expansion of the functional $\He$ around the ball. However, a completely different calibration argument allows us to show 
the maximality property of the ball among all convex sets satisfying a uniform bound on the curvature, see Theorem \ref{teo:localmax1}.
This latter result suggests that for sufficiently large Gaussian volume, the ball shound be indeed a global maximizer, but at the moment this seems to be a difficult open problem.  
\section{Preliminary Definitions}
For a measurable set $E\subset \R^n$, we denote by $\gamma(E)$ its Gaussian meausure 
\beq \label{eq:defmeas}
\gamma(E)= \frac{1}{(2\pi)^\frac n2}\int_E e^{-\frac{|x|^2}{2}}\, dx,
\eeq
normalized so that $\gamma(\R^n)=1$. 
We say that
$E$ has finite Gaussian perimeter if 
\[
P_\gamma(E)=\frac{1}{(2\pi)^{\frac{n-1}{2}}}\sup_{\|X\|_{L^\infty(\R^n)}\leq 1}\left\{\int_{\Omega}\Div(e^{-\frac{|x|^2}{2}} X(x))\, dx, \,\, X\in C^1_c(\R^n)\right\}<\infty.
\]
Note that if $E$ is a smooth set, then
\[
P_\gamma(E)= \frac{1}{(2\pi)^\frac{n-1}{2}}\int_{\pa E}e^{-\frac{|x|^2}{2}}\, d\H^{n-1},
\]
where $\H^{n-1}$ denotes the $(n-1)-$dimensional Hausdorff measure.

Let $E$ be an open set  of class $C^2$ and $X: M\to \R^n$ a $C^1$ vector field. 
For any $x \in M$, denoting by  $\tau_1,\dots,\tau_{n-1}$
an orthonormal base for the tangent space $T_xM$ with $x\in M$, the tangential divergence of $X$ is given by
\[
\Div_{\tau} X= \sum_{i=1}^{n-1} \langle \nabla_{\tau_i} X, \tau_i\rangle,
\]
where $\nabla_{\tau_i} X$ is the derivative of $X$ in the direction $\tau_i$.
Note that if we still denote by $X$ a $C^1$ extension of the vector field in a tubular neighborhood of $\pa E$, then
\[
\Div_\tau X= \Div X -\langle D X \nu_{\pa E},\nu_{\pa E}\rangle
\]
where $\nu_{\pa E}$ is the exterior normal to $E$.
We recall also that the mean curvature of $\pa E$ (actually the sum of the principal curvatures), is given by
\beq \label{eq:defnH}
H_{\pa E}= \Div_{\tau} \nu_{\pa E}.
\eeq
If we extend $\nu_{\pa E}$ in a tubular neighborhood of $\pa E$ so that the resulting vector field is still of class $C^1$, then $H_{\pa E}=\Div \nu_{\pa E}$ on $\pa E$.

Observe that with this definition it turns out that if $E$ is locally the subgraph of a $C^2(\R^{n-1})$ function $u$ then 
\beq \label{eq:graph}
H_{\pa E}=- \Div\left(\frac{D u}{\sqrt{1+|D u|^2}}\right).
\eeq
We recall that if $E$ is a bounded open set of class $C^2$ and $X\in C^1(\pa E,\R^n)$, the divergence theorem for manifolds states that
\[
\int_{\pa E} \Div_{\tau } X d\H^{n-1}= \int_{\pa E} H_{\pa E}\langle X,\nu_{\pa E} \rangle \, d\H^{n-1}.
\]
In particular, if $X$ is a tangent vector field it holds
\[
\int_{\pa E} \Div_{\tau } X d\H^{n-1}=0.
\]
Note that if $E$ is an open set of class $C^{1,1}$, hence it is locally the subgraph of a $C^{1,1}
$ function $u$, the mean curvature of $\pa E$ can be defined using \eqref{eq:graph}. With this definition the above divergence theorem still holds.
Finally, the Laplace-Beltrami operator on $\pa E$ is defined for any $h\in C^2(\pa E)$ as 
\[
\Delta_{\pa E} h= \Div_{\tau} \nabla h
\]
where $\nabla h$ denotes the tangential gradient of $h$.
\section{Two dimensional case: a two side estimate of the integral of the curvature in weighted spaces}
\label{sec:2d}
In this section we provide an estimate for the weighted integral of the curvature under suitable assumptions on the weight.
Let $f:[0,\infty)\to (0,\infty) $ be a $C^1$ not increasing function and $w:(0,+\infty)\to [0,\infty)$ be defined as
\beq \label{eq:defw}
w(r)= - \frac{ f'(r)}{r}.
\eeq 
We define the weighted area $|E|_w$ of a set $E$ as
\[
|E|_w= \int_E w(|x|)\, dx.
\]
Note that if $f=e^{-\frac{r^2}{2}}$ then $w= e^{-\frac{r^2}{2}}$. Hence the results given in this section apply to the particular case of the Gaussian weight.

We start by proving an isoperimetric type inequality concerning a weighted integral of the curvature. To this aim, here and in the following we denote by $B_r$ the ball centered at the origin with radius $r$.
\begin{prop} \label{prop:curv}
Let $r>0$, $f:[0,\infty)\to (0,\infty) $ a $C^1$ not increasing function and
 $w$ be defined as in \eqref{eq:defw}.
For any convex set $E\subset \R^2$ of class $C^{1,1}$ with $|E|_w=|B_r|_w$  containing the origin it holds
\beq \label{eq:curvature2d}
\int_{\pa E}H_{\pa E}f(|x|)\,d\H^1\leq
\int_{\pa B_r} H_{\pa B_r}f(|x|)\, d\H^1.
\eeq
If $w$ is not increasing  \eqref{eq:curvature2d} holds for any convex set $E$ of class $C^{1,1}$  with $|E|_w=|B_r|_w$.  
\end{prop}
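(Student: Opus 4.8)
The plan is to exploit the special two-dimensional fact that, along a convex curve, the curvature and the arc-length element are reciprocal, so that the boundary integral collapses to a one-dimensional integral over the circle. Assuming first that $E$ contains the origin in its interior, I would parametrise $\pa E$ by the polar angle $\phi\in[0,2\pi)$ via the radial function $R(\phi)>0$, writing $\pa E=\{R(\phi)(\cos\phi,\sin\phi)\}$. Since $E$ is convex and of class $C^{1,1}$, the function $R$ is of class $C^{1,1}$, and
\beq
H_{\pa E}=\frac{R^2+2(R')^2-RR''}{\big(R^2+(R')^2\big)^{3/2}},\qquad d\H^1=\sqrt{R^2+(R')^2}\,d\phi .
\eeq
Multiplying, the factor $\big(R^2+(R')^2\big)^{1/2}$ cancels and one is left with
\beq
\int_{\pa E}H_{\pa E}f(|x|)\,d\H^1=\int_0^{2\pi}\frac{R^2+2(R')^2-RR''}{R^2+(R')^2}\,f(R)\,d\phi .
\eeq

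The key algebraic step is to rewrite the rational factor as $1-\frac{d}{d\phi}\arctan\!\big(R'/R\big)$. The constraint becomes transparent in these coordinates: since $w(s)\,s=-f'(s)$, integrating radially gives $|E|_w=\int_0^{2\pi}\!\big(f(0)-f(R(\phi))\big)\,d\phi$ and $|B_r|_w=2\pi\big(f(0)-f(r)\big)$, so that $|E|_w=|B_r|_w$ is exactly $\int_0^{2\pi}f(R)\,d\phi=2\pi f(r)$. Feeding this in and integrating by parts — the boundary terms vanish by periodicity — I would obtain the identity
\beq
\int_{\pa E}H_{\pa E}f(|x|)\,d\H^1=2\pi f(r)+\int_0^{2\pi}f'(R)\,R'\,\arctan\!\Big(\frac{R'}{R}\Big)\,d\phi .
\eeq
Because $f$ is non-increasing we have $f'\le0$, while $R>0$ forces $\arctan(R'/R)$ to share the sign of $R'$; hence the integrand is pointwise $\le0$ and the right-hand side is $\le2\pi f(r)=\int_{\pa B_r}H_{\pa B_r}f\,d\H^1$. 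This proves \eqref{eq:curvature2d} for sets containing the origin and, when $f$ is strictly decreasing, shows that equality forces $R\equiv r$, i.e. $E=B_r$.

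For the second statement the hypothesis that $w$ be non-increasing has to replace the geometric assumption $0\in E$, and here I expect the real difficulty. The radial parametrisation above is unavailable once the origin lies outside $E$, so I would instead parametrise $\pa E$ by the outer-normal angle $\theta$, which is always legitimate for a convex body. The same curvature/arc-length cancellation, now in the form $H_{\pa E}\,d\H^1=d\theta$, yields the robust identity $\int_{\pa E}H_{\pa E}f\,d\H^1=\int_0^{2\pi}f(\rho(\theta))\,d\theta$, where $\rho(\theta)=\sqrt{h^2+(h')^2}$ and $h$ is the support function. The plan is then to reduce to the case already treated: since $w$ is non-increasing, the weighted area of a convex body grows when the body is moved so as to cover more of the high-weight region near the origin, which should allow one to bring the origin inside $E$ while keeping $|E|_w=|B_r|_w$ fixed by a simultaneous rescaling, and then to compare the functional along this deformation. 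Making this reduction rigorous — in particular controlling the monotonicity of $\int_0^{2\pi}f(\rho(\theta))\,d\theta$ under the translation-plus-rescaling that restores the volume constraint — is the main obstacle; an alternative route worth exploring is to circularly symmetrise $E$ about the origin, which preserves both convexity and $|E|_w$, and to verify the inequality directly on the resulting line-symmetric competitors.
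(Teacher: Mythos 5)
Your first part is correct and is essentially the paper's own argument: polar parametrization $\pa E=\{\rho(\theta)(\cos\theta,\sin\theta)\}$, the identity
\[
\frac{\rho^2+2\rho'^2-\rho\rho''}{\rho^2+\rho'^2}=1-\frac{d}{d\theta}\arctan\Big(\frac{\rho'}{\rho}\Big),
\]
the reduction of the constraint $|E|_w=|B_r|_w$ to $\int_0^{2\pi}f(\rho)\,d\theta=2\pi f(r)$, integration by parts, and the sign of $f'(\rho)\,\rho'\arctan(\rho'/\rho)$. One omission: the proposition only asks that $E$ \emph{contain} the origin, possibly on $\pa E$; you assume $0$ is interior ``first'' but never return to the boundary case. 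The paper handles it by translating $E$ by a small vector so that the origin becomes interior and passing to the limit; a one-line fix, but it should be there, and it is in fact reused in the second part.

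The second statement is where your proposal has a genuine gap, which you acknowledge: your plan is to translate $E$ toward the origin \emph{and simultaneously rescale} so as to keep $|E|_w=|B_r|_w$, and you cannot control the functional along that deformation. The missing idea is that no rescaling is needed — you should not restore the constraint at all, but instead compare with a \emph{larger} ball and let the monotonicity of $f$ absorb the discrepancy. Concretely (this is the paper's argument): let $x_0$ be the point of $\overline E$ nearest to the origin. By the variational characterization of the projection onto a convex set, $\langle x,x_0\rangle\geq|x_0|^2$ for all $x\in\overline E$, hence $|x-x_0|\leq|x|$ on $\overline E$. This single geometric fact gives both of the comparisons you need: (i) since $E$ is convex, $H_{\pa E}\geq0$, and since $f$ is non-increasing, $\int_{\pa E}H_{\pa E}f(|x|)\,d\H^1\leq\int_{\pa (E-x_0)}H_{\pa (E-x_0)}f(|x|)\,d\H^1$ (the translation is curvature-preserving and only decreases $|x|$); (ii) since $w$ is non-increasing — this is exactly where that hypothesis enters — $|E-x_0|_w\geq|E|_w=|B_r|_w$, so the radius $s$ with $|B_s|_w=|E-x_0|_w$ satisfies $s\geq r$. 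Now $E-x_0$ contains the origin (on its boundary, whence the limiting argument above), so the first part applies to it and yields $\int_{\pa(E-x_0)}H f(|x|)\,d\H^1\leq 2\pi f(s)\leq 2\pi f(r)=\int_{\pa B_r}H_{\pa B_r}f(|x|)\,d\H^1$. Your alternative suggestion, circular symmetrization about the origin, is not a safe fallback either: it is not clear that it preserves convexity (Steiner symmetrization does, angular symmetrization in general does not), and even granting that, you would still need a comparison between the curvature integral of $E$ and that of its symmetrization, for which you offer no mechanism.
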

\begin{proof}
For a convex set $E$ of class $C^{1,1}$ containing the origin 
we denote by $\rho: \R\to (0,\infty)$ a $C^{1,1}$ periodic function such that $\pa E= \{\rho(\theta)( \sin \theta, \cos \theta):\, \theta \in [0,2\pi]\}$.
Note that for almost every $\theta \in [0,2\pi]$ the curvature at $\rho(\theta)(\sin \theta, \cos \theta)$ is given by 
\[
H_{\pa E}= \frac{\rho^2+2\rho'^2-\rho\rho''}{(\rho^2+\rho'^2)^\frac32}.
\]
Thus we compute 
\[
|E|_w= \int_0^{2\pi} \int_{0}^{\rho} t w(t)\, dt
\]
and
\[
\int_{\pa E} H_{\pa E} f(|x|)\,d\H^{1}
=\int_{0}^{2\pi} \frac{\rho^2+2\rho'^2-\rho\rho''}{\rho^2+\rho'^2}f(\rho)\,d\theta.
\]
Since $|E|_w=|B_r|_w$ we have, recalling \eqref{eq:defw}
\[
2\pi f(0)-\int_0^{2\pi} f(\rho)\, d\theta =\int_0^{2\pi} \int_{0}^{\rho} t w(t)\, dt= 2\pi \int_{0}^{r} t w(t)\, dt= 2\pi(f(0)- f(r))
\]
which gives
\[
\int_0^{2\pi} f(\rho)\, d\theta= 2\pi f(r).
\]
Hence, integrating by parts,
\beq \label{eq:curvature1}
\begin{split}
    \int_{\pa E}H_{\pa E} f(|x|) d\H^1&=
    \int_0^{2\pi} \frac{\rho'^2-\rho\rho''}{\rho^2+\rho'^2}  f(\rho)
+\int_ 0^{2\pi} f(\rho) \, d\theta 
\\
&=-\int_0^{2\pi} \frac{d}{d\theta}\arctan\left(\frac{\rho'}{\rho}\right) f(\rho) d\theta + 
\\ 
&=\int_{0}^{2\pi} (\rho \rho') \arctan\left(\frac{\rho'}{\rho}\right)\ f'(\rho) d\theta +2\pi f(r)\\
&\leq2\pi f(r)=\int_{\pa B_r}H_{\pa B_r}f(r) d\H^1 ,
\end{split}
\eeq
where in the last inequality we used that $t\arctan t\geq 0$ for all $t\in \R$ and $f'(\rho) \leq 0$ .

When $0\in \pa E$, given $\e>0$ small we may translate $E$ to get a set $E_\e= E+x_\e $ with $|x_\e|<\e$ and such that $0\in \operatorname{int} E_\e$. 
Then the validity of \eqref{eq:curvature2d} for $E$ follows by applying the same inequality to $E_\e$ and then letting $\e \to 0$. \\
In the case that the set $E$ does not contain the origin
let $x_0$ be the nearest point of $\overline{E}$ to the origin.
Hence, since $E$ is convex we have that 
$\langle x,x_0\rangle \geq |x_0|^2$ for all $x\in \overline{E}$, which in turn implies that $|x-x_0|< |x| $ for all $x\in \overline{E} $.
Thus $ |E-x_0|_w>|E|_w$. Let $s>0$ such that $|B_s|_w=|E-x_0|_w$. Since $|E-x_0|_w>|E|_w$ we have that $s>r$ and using that $E-x_0$ passes through the origin we find
\[
\int_{\pa E}H_{\pa E}f(|x|)\,d\H^1
\leq \int_{\pa (E-x_0)}H_{\pa E}f(|x|)\,d\H^1 
\leq 2\pi f(s) \leq 2\pi f(r).
\]

\end{proof}

We note that Proposition \ref{prop:curv} remains true if we replace the convexity assumption on $E$ with the assumption that $E$ is starshaped with respect to the origin.
Next result shows that when $E$ is a convex set containing the origin, the inequality above can be given in a stronger quantitative form. To this aim, given any sufficiently smooth set $E\subset \R^2$,
we introduce the following positive quantities
\beq
\alpha_f(E)= -\int_{\pa E} 
 \left(|x|-\frac{\langle x,\nu \rangle^2}{|x|} \right)    f'(|x|)    \,d\H^1,
\eeq
and 
\beq
\beta_f(E)= \int_{\pa E} \left(|x|-\frac{\langle x,\nu \rangle^2}{|x|} \right)     \left(\frac{f(|x|)- |x|f'(|x|)}{|x|^2} \right)   \,d\H^1.
\eeq

\begin{thm}\label{teo:2d}
Let $E$ be a convex set of class $C^{1,1}$ containing the origin such that $|E|_w=|B_r|_w$. Then
\beq \label{eq:curv2d1}
\alpha_f(E) \leq \int_{\pa B_r} H_{\pa B_r} f(|x|)\, d\H^1 -\int_{\pa E} H_{\pa E}f(|x|)\, d\H
^1 \leq \beta_f (E).
\eeq
\end{thm}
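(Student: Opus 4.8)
The plan is to prove the two bounds by the two mechanisms anticipated in the introduction: the upper estimate by the divergence theorem for manifolds recalled in the preliminaries, applied to a radial vector field, and the lower estimate by a pointwise comparison in polar coordinates. Both sides vanish on $B_r$ (there $\langle x,\nu\rangle=|x|$ and $H_{\pa E}\langle x,\nu\rangle=1$), so the estimates should be sharp at the ball. First I would fix the notation of Proposition~\ref{prop:curv}, $\pa E=\{\rho(\theta)(\sin\theta,\cos\theta)\}$, record the elementary identities $\langle x,\nu\rangle=\rho^2/\sqrt{\rho^2+\rho'^2}$ and $|x|-\langle x,\nu\rangle^2/|x|=\rho\rho'^2/(\rho^2+\rho'^2)$, and abbreviate the central member of \eqref{eq:curv2d1} by $D(E)$. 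From the computation leading to \eqref{eq:curvature1}, together with the constraint $\int_0^{2\pi}f(\rho)\,d\theta=2\pi f(r)$ proved in Proposition~\ref{prop:curv}, one has
\[
D(E)=\int_0^{2\pi}\rho'\arctan\!\Big(\frac{\rho'}{\rho}\Big)\,\big(-f'(\rho)\big)\,d\theta .
\]

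For the upper bound I would apply $\int_{\pa E}\Div_\tau X\,d\H^1=\int_{\pa E}H_{\pa E}\langle X,\nu\rangle\,d\H^1$ to the radial calibration $X=\frac{f(|x|)}{|x|}\,x$, admissible since $0\in\operatorname{int}E$. With $F(s)=f(s)/s$ one computes $DX=F'(|x|)\frac{x\otimes x}{|x|}+F(|x|)\mathrm{Id}$, hence by $\Div_\tau X=\Div X-\langle DX\nu,\nu\rangle$,
\[
\Div_\tau X=F(|x|)+F'(|x|)\Big(|x|-\frac{\langle x,\nu\rangle^2}{|x|}\Big),\qquad F'(s)=-\frac{f(s)-sf'(s)}{s^2},
\]
so that the second summand is exactly minus the integrand of $\beta_f$. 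Therefore $\int_{\pa E}\Div_\tau X\,d\H^1=\int_{\pa E}\frac{f}{|x|}\,d\H^1-\beta_f(E)$, and equating this with $\int_{\pa E}H_{\pa E}\frac{f}{|x|}\langle x,\nu\rangle\,d\H^1$ gives $\beta_f(E)=\int_{\pa E}\frac{f}{|x|}\big(1-H_{\pa E}\langle x,\nu\rangle\big)\,d\H^1$. Subtracting $D(E)$ and regrouping,
\[
\beta_f(E)-D(E)=\Big(\int_{\pa E}\frac{f}{|x|}\,d\H^1-2\pi f(r)\Big)+\int_{\pa E}H_{\pa E}f\,\Big(1-\frac{\langle x,\nu\rangle}{|x|}\Big)\,d\H^1 .
\]
The first bracket is nonnegative because $\int_{\pa E}\frac{f}{|x|}\,d\H^1=\int_0^{2\pi}f(\rho)\sqrt{1+(\rho'/\rho)^2}\,d\theta\ge\int_0^{2\pi}f(\rho)\,d\theta=2\pi f(r)$; the integral is nonnegative because convexity gives $H_{\pa E}\ge0$ and Cauchy--Schwarz gives $\langle x,\nu\rangle\le|x|$. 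This yields $D(E)\le\beta_f(E)$.

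For the lower bound I would return to polar coordinates and reduce $\alpha_f(E)\le D(E)$ to a pointwise inequality. After factoring the common nonnegative weight $\rho\,(-f'(\rho))$, the integrand of $D(E)$ is $t\arctan t$ and that of $\alpha_f(E)$ is $t^2/\sqrt{1+t^2}$ in the variable $t=\rho'/\rho$, so it suffices to verify
\[
t\arctan t\ \ge\ \frac{t^2}{\sqrt{1+t^2}}\qquad(t\in\R),
\]
i.e. $\arctan t\ge t/\sqrt{1+t^2}$. This holds since $g(t)=\arctan t-t/\sqrt{1+t^2}$ satisfies $g(0)=0$ and $g'(t)=\frac{1}{1+t^2}\big(1-\frac{1}{\sqrt{1+t^2}}\big)\ge0$. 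Integrating the pointwise bound over $[0,2\pi]$ gives $\alpha_f(E)\le D(E)$; matching the precise power of $|x|$ in the integrand of $\alpha_f$ against this polar computation is a point worth double-checking.

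The hard part is the upper bound, which is genuinely non-pointwise: the argument succeeds only because the calibration $X=\frac{f(|x|)}{|x|}x$ is chosen so that, after subtracting $D(E)$, the remainder splits into the two separately nonnegative terms displayed above, and convexity enters crucially both through $H_{\pa E}\ge0$ and through the global validity of the polar representation. The points I expect to be routine but still to require care are: the justification of the divergence theorem for merely $C^{1,1}$ sets, where $H_{\pa E}$ is defined only almost everywhere through \eqref{eq:graph}; the verification that $X$ is $C^1$ in a neighbourhood of $\pa E$, which is the reason one first reduces to $0\in\operatorname{int}E$; and the disposal of the cases $0\in\pa E$ and $0\notin\overline E$ by the translation and approximation argument employed at the end of the proof of Proposition~\ref{prop:curv}.
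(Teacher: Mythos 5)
Your proof of the upper bound $D(E)\le\beta_f(E)$ is correct and is, in substance, the paper's own argument: the paper also calibrates with $X=\frac{x}{|x|}f(|x|)$, uses $H_{\pa E}\ge 0$ and $\langle x,\nu\rangle\le |x|$ to write $\int_{\pa E}H_{\pa E}f\,d\H^1\ge\int_{\pa E}H_{\pa E}f\frac{\langle x,\nu\rangle}{|x|}\,d\H^1=\int_{\pa E}\Div_\tau X\,d\H^1$, and then needs exactly your inequality $\int_{\pa E}\frac{f(|x|)}{|x|}\,d\H^1\ge 2\pi f(r)$, which is its \eqref{eq:weight}. The only real difference is how that inequality is obtained: the paper applies the bulk divergence theorem to $\frac{x}{|x|^2}f(|x|)$ on $E\setminus B_\e$ (fundamental solution of the Laplacian), which has the advantage of working for any finite-perimeter set containing the origin, as the subsequent remark exploits; your one-line polar computation $\int_{\pa E}\frac{f}{|x|}\,d\H^1=\int_0^{2\pi}f(\rho)\sqrt{1+(\rho'/\rho)^2}\,d\theta\ge\int_0^{2\pi}f(\rho)\,d\theta=2\pi f(r)$ is more elementary but uses starshapedness. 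Either is adequate here.

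The lower bound is where there is a genuine gap, and it sits precisely at the point you flagged as ``worth double-checking''. Your formula
\[
D(E)=\int_0^{2\pi}\rho'\arctan\!\Big(\frac{\rho'}{\rho}\Big)\big(-f'(\rho)\big)\,d\theta
\]
is the correct outcome of the integration by parts (differentiating $f(\rho)$ produces $f'(\rho)\rho'$, with no extra factor $\rho$; the paper's \eqref{eq:curvature1}, which carries $\rho\rho'$, is misprinted). But then, with $t=\rho'/\rho$, one has $D(E)=\int_0^{2\pi}\rho\,t\arctan t\,(-f'(\rho))\,d\theta$, while by \eqref{eq:normal}
\[
\alpha_f(E)=\int_0^{2\pi}\frac{\rho\rho'^2}{\sqrt{\rho^2+\rho'^2}}\big(-f'(\rho)\big)\,d\theta
=\int_0^{2\pi}\rho^2\,\frac{t^2}{\sqrt{1+t^2}}\big(-f'(\rho)\big)\,d\theta ,
\]
so after factoring $\rho\,(-f')$ the two integrands are $t\arctan t$ and $\rho\,t^2/\sqrt{1+t^2}$: the powers of $\rho$ do not match, and the pointwise inequality $t\arctan t\ge t^2/\sqrt{1+t^2}$ yields $D(E)\ge\alpha_f(E)$ only where $\rho\le 1$. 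This cannot be repaired: for the Gaussian weight and $r>1$, a small convex perturbation of $B_r$ (adjusted to preserve $|E|_w$) gives $D(E)\simeq r^2e^{-r^2/2}\int t^2\,d\theta$ but $\alpha_f(E)\simeq r^3e^{-r^2/2}\int t^2\,d\theta$, i.e. $\alpha_f(E)>D(E)$. So the first inequality in \eqref{eq:curv2d1}, with $\alpha_f$ as defined, is false in general; what your computation (correctly) proves is the statement with $\alpha_f$ replaced by the quantity carrying an extra factor $\frac{1}{|x|}$, namely $-\int_{\pa E}\frac{1}{|x|}\big(|x|-\frac{\langle x,\nu\rangle^2}{|x|}\big)f'(|x|)\,d\H^1$, whose polar integrand is exactly $\rho\,\frac{t^2}{\sqrt{1+t^2}}(-f')$. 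The paper's own proof closes only because it starts from the misprinted \eqref{eq:curvature1}; your honest flag thus points not to a defect of your approach but to an inconsistency between the paper's definition of $\alpha_f$ and the correct polar identity, which you should state and resolve explicitly rather than leave as a remark.
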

\begin{proof}
Denote by $\rho: \R\to (0,\infty)$ a $C^{1,1}$ periodic function such that $\pa E= \{\rho(\theta)( \sin \theta, \cos \theta):\, \theta \in [0,2\pi]\}$.
To prove the first inequality we observe that
\beq \label{eq:normal}
|x|- \frac{\langle x,\nu \rangle^2}{|x|} = \frac{\rho \rho '^2}{\rho^2+\rho'^2}. 
\eeq
Using that 
\[
t\arctan t \geq \frac{ t^2}{\sqrt{1+t^2}} 
\]
for all $t\in \R$, using \eqref{eq:curvature1} and arguing as in the proof of Proposition \ref{prop:curv} we get
\[
\begin{split}
\int_{\pa E} H_{\pa E} f(|x|)\, d\H^1
=&\int_{0}^{2\pi} \rho^2 \frac{\rho'}{\rho} \arctan\left(\frac{\rho'}{\rho}\right)\ f'(\rho) d\theta +2\pi f(r)\\
\leq&\int_{0}^{2\pi} \frac{\rho' \rho^2}{\sqrt{\rho'^2+\rho^2}}\ f'(\rho) d\theta +2\pi f(r)
\\
=& \int_{\pa E}  \left(|x|-\frac{\langle x,\nu\rangle^2 }{|x|}\right) f'(|x|)\, d\H^1  + \int_{\pa B_r} H_{\pa B_r} f(|x|) \, d\H^1.
\end{split}
\]
To prove the second inequality we first recall that up to a constant
$u(x)=\log |x|$ is the fundamental solution of the laplacian in two dimensions. As a consequence of this fact we claim that if $E$ contains the origin and $|E|_w=|B_r|_w$ 
\beq \label{eq:weight}
\int_{\pa E} \frac{1}{|x|}f(|x|) \, d\H^1\geq \int_{\pa B_r}\frac{1}{|x|}f(|x|) \, d\H^1.
\eeq
In fact, using the divergence theorem in $E\setminus B_\e$, where $\overline B_\e\subset E$ and letting $\e\to 0^+$, with some elementary calculations we get 
\beq \label{eq:above}
\begin{split}
    \int_{\pa E} \frac{1}{|x|}f(|x|) \, d\H^1\geq \int_{\pa E} \frac{\langle x,\nu\rangle }{|x|^2}f(|x|)\, d\H^1
&= \int_E \Div\left( \frac{x}{|x|^2}  f(|x|) \right)dx
\\
&= 2\pi f(0)+\int_{E} \frac{f'(|x|)}{|x|} \, dx
\\&  
=\int_{\pa B_r}\frac{1}{|x|}f(|x|) \, d\H^1,
\end{split}
\eeq
where in the last equality we used the assumption that $|E|_w=|B_r|_w$.
Note that the above inequality is strict, unless $E=B_r$ and it can be actually written in a quantitative form arguing, see for instance \cite{LAM1}. 
To prove the proposition we now use \eqref{eq:above} and the diverge theorem on a manifold.
\[
\begin{split}
    \int_{\pa E} H_{\pa E} f(|x|)& \,d\H^1\geq \int_{\pa E} H_{\pa E}\left\langle\frac{x}{|x|},\nu\right\rangle  f(|x|)\, d\H^1\\
&= \int_{\pa E} \Div_\tau \left( \frac{x}{|x|}f(|x|)\right)  \,d\H^1
\\
&=\int_{\pa E}\frac{1}{|x|}f(|x|)\,d\H^1
+\int_{\pa E}\left(|x|-\frac{\langle x,\nu \rangle^2}{|x|} \right) \left(\frac{|x|f'(|x|)- f(|x|)}{|x|^2} \right) \,d\H^1
\\
&\geq 
\int_{\pa B_r}\frac{1}{|x|}f(|x|) \, d\H^1 +\int_{\pa E}\left(|x|-\frac{\langle x,\nu \rangle^2}{|x|} \right)  \left(\frac{|x|f'(|x|)- f(|x|)}{|x|^2} \right) \,d\H^1
\\
&=
\int_{\pa B_r}H_{\pa B_r}f(|x|) \, d\H^1 +\int_{\pa E}\left(|x|-\frac{\langle x,\nu \rangle^2}{|x|} \right)  \left(\frac{|x|f'(|x|)- f(|x|)}{|x|^2} \right) \,d\H^1,
\end{split}
\]
thus proving the second inequality in \eqref{eq:curv2d1}.
\end{proof}
\begin{oss}
Observe that the above proof shows that 
inequality \eqref{eq:weight} holds for any set $E$ of finite perimeter containing the origin in the interior. Note however that this latter assumption can not be weakened as it is shown by an example in \cite{FuLa}.
\end{oss}
Note that the above theorem essentially says that one may control the gap $\He(E)-\He(B_r)$ with the oscillation of the normals to $E$ and $B_r$. 
To be more precise,  
let us denote by $\pi$ the projection of $\pa E$ on $\pa B_r$. Observe that 
\[
\frac{1}{2}|x| |\nu_{\pa E} (x)- \nu_{\pa B_r} (\pi(x))|^2\leq 
|x|-\frac{\langle x,\nu \rangle^2}{|x|}\leq |x| |\nu_{\pa E} (x)- \nu_{\pa B_r} (\pi(x))|^2.
\]
Then it is clear that under the assumption of Theorem \ref{teo:2d} and if
 $E\subset B_R$ for some $R>0$, then  
 \[|\int_{\pa E} H_{\pa E} f(|x|)\, d\H^1- \int_{\pa B_r} H_{\pa B_r} f(|x|)\, d\H^1| \leq C(f,R)\|\nu_{\pa E}(x)- \nu_{\pa B_r}(\pi (x))\|^2_{L^2(\pa E)}
 \] 
 where the constant $C(f,f',R)$ depends only on the function $f$ and its derivative and $R$.
\\
Note that the above Theorem applies in particular to the Gaussian weight $\gamma(r)= e^{-\frac{r^2}{2}}$. As a consequence of this we get

\begin{cor}
Let $E\subset \R^2$ convex and passing through the origin and $r>0$ such that $\gamma(E)=\gamma(B_r)$. Then it holds
\[
 \alpha_\gamma (E)\leq
\int_{\pa B_r}H_{\pa B_r}e^{-\frac{|x|^2}{2}}\, d\H^1-\int_{\pa E}H_{\pa E}e^{-\frac{|x|^2}{2}}\, d\H^1 \leq \beta_\gamma(E).
\]
\end{cor}

\begin{oss}We note that inequalities \eqref{eq:curvature2d} and \eqref{eq:curv2d1} can be immediately extended to any bounded convex set $E$ contained in the plane with not empty interior. To see this we recall (see \cite[Section 4.2]{Sc}) that for such $E$ there exists a \textit{curvature measure} $\mu_E$ supported on $\pa E$ 
such that if $E_h$ is a sequence of smooth convex sets converging in the Hausdorff distance to $E$, then 
$H_{\pa E_h} \, \H^1 \res \pa E_h  \rightharpoonup \mu_E$.
 Using this measure, for instance \eqref{eq:curvature2d} becomes 
 \[
 \int_{\pa E} f(|x|)\, d\mu_E\leq \int_{\pa B_r} H_{\pa B_r} f(|x|)\, d\H^1,
 \]
 whenever $E$ is such that $|E|_w=|B_r|_w$.
A similar extension also holds for
  \eqref{eq:curv2d1}.
 
\end{oss}
 We conclude this section by proving another consequence of Theorem \ref{teo:2d}.  More precisely, if $E_h\to B_r$ in the Hausdorff distance then the corresponding weighted curvature integrals converge with a speed controlled by the distance of $E_h$ from $B_r$.
 To this aim, given two closed sets $E,F\subset \R^2$ we denote by $d_\H (E,F)$ the Hausdorff distance between $E$ and $F$.
We will also use the following lemma, which is the two dimensional version of a more general statement proved in \cite{Fus} (see proof of Lemma 3.3). 
 \begin{lem}\label{lem:convex}
 Let $E\subset \R^2$ be a convex body containing the origin and let $\rho : [0,2\pi] \to (0,2)$ be such that $\pa E= \rho(\theta)(\cos \theta,\sin \theta)$. Then
 \[
 \|\rho'\|_{L^\infty} \leq 2\sqrt{\|\rho-1\|_{L^\infty}} \frac{1+\|\rho-1\|_{L^\infty}}{1-\|\rho-1\|_{L^\infty}} .
 \]
 \end{lem}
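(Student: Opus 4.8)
The plan is to derive the bound from the elementary observation that, by convexity, no supporting line of $E$ can cross the inscribed disk, combined with the fact that the quantity $\|\rho-1\|_{L^\infty}$ simultaneously controls the circumradius and the inradius of $E$.

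Set $\delta:=\|\rho-1\|_{L^\infty}$; since $\rho$ is continuous on $[0,2\pi]$ with values in $(0,2)$, it attains its extrema and hence $\delta<1$. First I would record the two-sided inclusion $B_{1-\delta}\subseteq E\subseteq \overline{B_{1+\delta}}$. The outer inclusion is immediate, because every boundary point has modulus $\rho(\theta)\le 1+\delta$; the inner one follows from the fact that $E$, being convex and containing the origin, is star-shaped with respect to the origin, so that $\rho(\theta)\ge 1-\delta$ forces $B_{1-\delta}\subseteq E$.

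Next, at any $\theta$ where $\rho$ is differentiable (hence for a.e.\ $\theta$, as $\rho$ is Lipschitz) I would compute the distance from the origin to the supporting line of $E$ at the point $x(\theta)=\rho(\theta)(\cos\theta,\sin\theta)$. Writing the outer unit normal there as $\nu=\big(\rho(\cos\theta,\sin\theta)-\rho'(-\sin\theta,\cos\theta)\big)/\sqrt{\rho^2+\rho'^2}$, a direct computation gives
\[
\operatorname{dist}(0,\ell_\theta)=\langle x(\theta),\nu\rangle=\frac{\rho(\theta)^2}{\sqrt{\rho(\theta)^2+\rho'(\theta)^2}},
\]
where $\ell_\theta$ denotes the supporting line. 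Because $E$ is convex and contains $B_{1-\delta}$, the line $\ell_\theta$ cannot meet $B_{1-\delta}$, so its distance from the origin is at least $1-\delta$. Hence $\rho^2/\sqrt{\rho^2+\rho'^2}\ge 1-\delta$, which rearranges to
\[
\rho'(\theta)^2\le \rho(\theta)^2\,\frac{\rho(\theta)^2-(1-\delta)^2}{(1-\delta)^2}.
\]
Using $\rho(\theta)\le 1+\delta$ together with $\rho(\theta)^2-(1-\delta)^2\le (1+\delta)^2-(1-\delta)^2=4\delta$, this yields $\rho'(\theta)^2\le 4\delta(1+\delta)^2/(1-\delta)^2$, that is $|\rho'(\theta)|\le 2\sqrt{\delta}\,(1+\delta)/(1-\delta)$ for a.e.\ $\theta$; taking the essential supremum gives the claim.

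The only delicate point is the identification of the ``tangent line'' produced by $\rho'$ with a genuine supporting line of the convex body: this is precisely where convexity enters, through the monotonicity of the Gauss map, and it guarantees both that the supporting line at a differentiable boundary point is the tangent line and that $B_{1-\delta}$ lies entirely on the inner side of it. Everything else reduces to the algebra above. I expect this regularity bookkeeping, rather than any estimate, to be the main thing to treat with care; it can be sidestepped by first establishing the inequality for smooth convex $\rho$ and then approximating a general convex body in the Hausdorff distance.
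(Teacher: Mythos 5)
Your proof is correct: the supporting-line argument (at a.e.\ $\theta$ the tangent line at $\rho(\theta)(\cos\theta,\sin\theta)$ is a supporting line of $E$, hence cannot cut the inscribed disk $B_{1-\delta}$, which gives $\rho^2/\sqrt{\rho^2+\rho'^2}\ge 1-\delta$ and, after the algebra $(1+\delta)^2-(1-\delta)^2=4\delta$, exactly the stated bound) is complete modulo the standard facts you flag, namely that the radial function of a convex body with the origin in its interior is Lipschitz and that the supporting line at a point of differentiability is the tangent line. The paper itself does not prove this lemma but cites the proof of Lemma 3.3 in \cite{Fus}, which runs on the same supporting-hyperplane computation, so your argument is essentially the one the paper relies on.
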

 
\begin{thm}
Let $f:[0,\infty) \to (0,\infty)$ a $C^1$ not increasing function and $E_h \subset \R^2$ a sequence of convex set converging to $B_r$ in the Hausdorff distance. Then there exists a constant depending only on $r$ and $f$ such that for $h$ large
\[
\left| \int_{\pa E_h} f(|x|) d\mu_{E_h}- \int_{\pa B_r}f(|x|)H_{\pa B_r} d\H^1 \right| \leq 
C d_{\H}(E_h, B_r). 
\]
\end{thm}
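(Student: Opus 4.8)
The plan is to combine the two-sided estimate \eqref{eq:curv2d1} of Theorem~\ref{teo:2d} — in the extension to arbitrary convex bodies recorded in the Remark above, where the term $\int_{\pa E_h}H_{\pa E_h}f(|x|)\,d\H^1$ is replaced by $\int_{\pa E_h}f(|x|)\,d\mu_{E_h}$ — with the elementary geometry of convex bodies that are Hausdorff-close to $B_r$. Write $d_h=d_\H(E_h,B_r)$. The first point is a purely geometric one: for $h$ large one has $B_{r-d_h}\subseteq E_h\subseteq B_{r+d_h}$. Indeed, $d_\H(E_h,B_r)\le d_h$ means $E_h\subseteq B_r+\overline{B_{d_h}}=B_{r+d_h}$ and $B_r\subseteq E_h+\overline{B_{d_h}}$; passing to support functions, the second inclusion yields $h_{E_h}(u)\ge r-d_h$ for every unit vector $u$, which for convex sets is equivalent to $B_{r-d_h}\subseteq E_h$. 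Consequently $0\in\operatorname{int}E_h$, the radial function $\rho_h$ of $E_h$ is well defined with $\pa E_h=\{\rho_h(\theta)(\cos\theta,\sin\theta):\theta\in[0,2\pi]\}$, and moreover $\|\rho_h-r\|_{L^\infty}\le d_h$ and $r-d_h\le|x|\le r+d_h$ on $\pa E_h$.

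Since $E_h$ need not satisfy any volume constraint, I would next introduce the radius $r_h$ determined by $|B_{r_h}|_w=|E_h|_w$; as $|B_s|_w=2\pi(f(0)-f(s))$ and $|E_h|_w\to|B_r|_w$, such $r_h$ exists for $h$ large. Applying the extended Theorem~\ref{teo:2d} to $E_h$ and the ball $B_{r_h}$ gives
\beq
\alpha_f(E_h)\le \int_{\pa B_{r_h}}H_{\pa B_{r_h}}f(|x|)\,d\H^1-\int_{\pa E_h}f(|x|)\,d\mu_{E_h}\le\beta_f(E_h),
\eeq
with $\alpha_f(E_h),\beta_f(E_h)\ge0$. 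It then remains to bound $\alpha_f(E_h)$ and $\beta_f(E_h)$ by $Cd_h$, and to compare the ball term of $B_{r_h}$ with that of $B_r$.

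For the defect terms I would use \eqref{eq:normal}, that is $|x|-\langle x,\nu\rangle^2/|x|=\rho_h\rho_h'^2/(\rho_h^2+\rho_h'^2)$, together with $d\H^1=\sqrt{\rho_h^2+\rho_h'^2}\,d\theta$, so that the common geometric factor appearing in both $\alpha_f$ and $\beta_f$ is
\beq
\int_{\pa E_h}\Big(|x|-\frac{\langle x,\nu\rangle^2}{|x|}\Big)\,d\H^1=\int_0^{2\pi}\frac{\rho_h\rho_h'^2}{\sqrt{\rho_h^2+\rho_h'^2}}\,d\theta\le\int_0^{2\pi}\rho_h'^2\,d\theta\le2\pi\|\rho_h'\|_{L^\infty}^2.
\eeq
Since the remaining weights $-f'(|x|)$ and $(f(|x|)-|x|f'(|x|))/|x|^2$ are bounded on $\{r-d_h\le|x|\le r+d_h\}$ by a constant depending only on $f$ and $r$, the whole matter reduces to the estimate $\|\rho_h'\|_{L^\infty}^2\le Cd_h$. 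This is where convexity is essential: applying Lemma~\ref{lem:convex} to the rescaled convex body $E_h/r$, whose radial function is $\rho_h/r$, gives $\|\rho_h'\|_{L^\infty}\le C\sqrt{\|\rho_h-r\|_{L^\infty}}\le C\sqrt{d_h}$, whence $\alpha_f(E_h),\beta_f(E_h)\le Cd_h$.

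Finally, since $\int_{\pa B_s}H_{\pa B_s}f(|x|)\,d\H^1=2\pi f(s)$ and $|B_s|_w=2\pi(f(0)-f(s))$, the ball terms satisfy $2\pi f(r_h)-2\pi f(r)=|B_r|_w-|E_h|_w$; and since $E_h\triangle B_r\subseteq B_{r+d_h}\setminus B_{r-d_h}$, the right-hand side is controlled by $2\pi(f(r-d_h)-f(r+d_h))\le Cd_h$ in absolute value. The triangle inequality then combines this with the bound on the gap between $\pa E_h$ and $\pa B_{r_h}$ to yield the assertion. The crux of the argument, and the step I expect to be the main obstacle, is the passage $\|\rho_h'\|_{L^\infty}^2\le Cd_h$: the $L^\infty$ oscillation of the normals is only of order $\sqrt{d_h}$, but because $\alpha_f$ and $\beta_f$ depend on $\rho_h'$ quadratically, their size is governed by $\|\rho_h'\|_{L^\infty}^2\sim d_h$, and it is precisely the convexity bound of Lemma~\ref{lem:convex} that upgrades Hausdorff closeness into this linear control.
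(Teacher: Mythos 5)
Your proof is correct and follows essentially the same route as the paper: the intermediate ball $B_{r_h}$ matching the weighted area of $E_h$, the upper bound $\beta_f$ from the extended Theorem \ref{teo:2d} combined with \eqref{eq:normal}, and Lemma \ref{lem:convex} applied to the rescaled body to convert $\|\rho_h'\|_{L^\infty}^2$ into $Cd_\H(E_h,B_r)$. The only cosmetic difference is that you compare the ball terms through the weighted area identity $2\pi f(r_h)-2\pi f(r)=|B_r|_w-|E_h|_w$, whereas the paper uses $|r_h-r|\leq d_\H(E_h,B_r)$ directly; both are equivalent elementary steps.
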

\begin{proof}
Let $w$ be the function defined as in \eqref{eq:defw} and for all $h$ let $r_h$ be the unique positive number such that $|B_{r_h}|_w=|E_h|_w$.
We have

\beq\label{eq:curvature2}
\begin{split}
\left|\int_{\pa B_r}f(|x|)H_{\pa B_r}\,d\H^1 -\int_{\pa E_h} f(|x|)\,d\mu_{E_h} \right|
\leq& 
\left|\int_{\pa B_r}f(|x|) H_{\pa B_r}\, d\H^1-\int_{\pa B_{r_h}}f(|x|)H_{\pa B_r}\, d\H^1  \right|\\
&+  \int_{\pa B_{r_h}}H_{\pa B_{r_h}}f(|x|) \, d\H^1-\int_{\pa E_h}f(|x|)\, d\mu_{E_h}. 
\end{split}
\eeq
Setting $d_h= d_\H(E_h,B_r)$, since for $h$ large $B_{r-d_h}\subset E_h\subset B_{r+d_h}$ we have 
$r-d_h\leq r_h \leq r+d_h$, that is $|r-r_h|\leq d_h$. Hence for $h$ large we may estimate the first integral on the right hand side of \eqref{eq:curvature2} as follows.
\[
\begin{split}
\left|\int_{\pa B_r}f(|x|) H_{\pa B_r}\, d\H^1-\int_{\pa B_{r_h}}f(|x|)H_{\pa {B_r}}\, d\H^1 \right|
&\leq 2\pi |f(r_h)-f(r)|
\\& \leq 2\pi \max_{[r/2,2r]} |f'| |r_h-r| \leq C d_h.
\end{split}
\] 
To estimate the second integral we denote by $\rho_h$ the Lipschitz function such that $\pa E_h = \rho_h (\cos \theta, \sin \theta)$. 
Then we use the second inequality in \eqref{eq:curv2d1}, \eqref{eq:normal} and Lemma \ref{lem:convex} applied to $\frac{1}{r}E_h$ to get for $h$ large
\[
\begin{split}
 \int_{\pa B_{r_h}}H_{\pa B_{r_h}}f(|x|) \, d\H^1-&\int_{\pa E_h}f(|x|)\, d\mu_{E_h}  
 \\
 &\leq \max_{\rho \in [r/2,2r]} \left(\frac{f(\rho)-\rho f'(\rho)}{\rho'^2}\right)
 \int_{\pa E_h}\left( |x|- \frac{\langle x,\nu_{\pa E_h}\rangle^2}{|x|}\right)\, d\H^1 
 \\&
\leq C\int_{0}^{2\pi} \rho_h'^2 \, d\theta
\leq 8\pi Cr \|\rho_h-r\|_{\infty}\left(\frac{r+ \|\rho_h-r\|_\infty}{r-\|\rho_h-r\|_\infty}\right)^2 
\\
&\leq C' d_h. 
\end{split}
\]
This last estimates concludes the proof.
\end{proof}

Observe that arguing as in final part of the above proof under the assumption of  
  Theorem \ref{teo:2d} if $d_\H(E,B_r)<1$, we have 
  \[\left|\int_{\pa E} H_{\pa E} f(|x|)\, d\H^1- \int_{\pa B_r} H_{\pa B_r} f(|x|)\, d\H^1\right| \leq C(f)d_\H(E,B_r)
 \] 
for some constant depending only on $f$.
\section{Higher dimension}\label{sec:highd}
The isoperimetric inequality proved in Proposition \ref{prop:curv} is false in higher dimension, as shown by the following example. 
\begin{exa}
Let $n=3$, $r>0$. If $r$ is sufficiently small, there exists a smooth convex body $E$ such that $\gamma (E)= \gamma (B_r)$ but
\[
\He (E) > \He (B_r).
\]
\end{exa}
\begin{proof}
Denote by $C(t)$ the cylinder in 
\[
C(s)=\{(x',x_3)\in \R^2\times \R:\, |x'|\leq s\}
\]
For any $r>0$ let $s(r)$ the unique positive number such that 
$
\gamma (C_{s(r)})=
\gamma(B_r).
$
Note that 
$$
\gamma (C_s)=\int_0^s te^{-\frac{t^2}{2}}\, dt= 1-e^{-\frac{s^2}{2}}
$$
while 
\[
(2\pi)^{\frac12}\gamma(B_r)= 2\int_0^r t^2e^{-\frac{t^2}{2}}\, dt= 
2\left(-re^{-\frac{r^2}{2}}  +\int_0^r    e^{-\frac{t^2}{2}}\, dt      \right).
\]
Since 
$\gamma(B_r)= \gamma (C_s)$ we get
$$
e^{\frac{-s^2}{2}} =1- \frac{2}{(2\pi)^\frac12}\left(-re^{-\frac{r^2}{2}}  +\int_0^r    e^{-\frac{t^2}{2}}\, dt      \right)
$$
Moreover, we also have
\[
\He(C_s)= (2\pi)^{\frac32} e^{-\frac{s^2}{2}},
\qquad\He (B_r)= 8\pi re^{-\frac{r^2}{2}}.
\]
Hence
\[
\begin{split}
    \He(C_s) &= 
(2\pi)^{\frac32} +4\pi (r e^{-\frac{r^2}{2}} -\int_{0}^r e^{-\frac{t^2}{2}}\, dt)
\\
&=\He (B_r)+(2\pi)^{\frac32}-4\pi (r e^{-\frac{r^2}{2}} +\int_{0}^r e^{-\frac{t^2}{2}}\, dt)
\\
&\geq \He (B_r)+ 1,
\end{split}
\]
provided $r$ is sufficiently small.
Let $C_{T,s(r)}$ the convex body 
obtained as the union of the cylinder $C_{s(r)} \cap \{|x_3|<T\}$ with the two half balls of radius $s(r)$ placed on the upper and lower basis of the cylinder. 
Since \[\gamma(C_{T,s(r)})\to \gamma (C_{s(r)}) \qquad \ \]
as $T\to \infty$, we conclude that $\He(C_{T,s(r)})>\He (B_{r'})$ with $r'$ such that $\gamma(C_{T,s(r)})=\gamma(B_{r'})$, provided $r$ is small and $T$ is sufficiently large.
\end{proof}

\begin{lem}
Let $E$ be a bounded open set of class $C^2$ starshaped with respect to the origin and let $h:\S^{n-1}\to (0,\infty)$ a $C^2$ function such that 
\[
\pa E= \{y=x h(x),\, x\in \S^{n-1} \}.
\] 
Then
\beq \label{eq:curv3}
H_{\pa E} (xh(x))=\frac{-\frac{1}{h}\Delta_{\S^{n-1}} h  +n-1}{\sqrt{|\nabla h|^2 +h^2}} 
+\frac{h\frac{1}{2}\langle\nabla|\nabla h|^2,\nabla h\rangle +h^2|\nabla h|^2}{h^2\sqrt{(|\nabla h|^2 +h^2})^3}.
\eeq
\end{lem}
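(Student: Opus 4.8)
The plan is to realize $\partial E$ as a level set and to compute the mean curvature as the Euclidean divergence of the unit normal. Write $g(y)=h(y/|y|)$ for the $0$-homogeneous extension of $h$ to $\R^n\setminus\{0\}$, and set $F(y)=|y|-g(y)$, so that $\partial E=\{F=0\}$ and, since $F<0$ near the origin and $F>0$ far out, the exterior normal is $\nu=DF/|DF|$. By \eqref{eq:defnH} and the remark following it, $H_{\pa E}=\Div\nu$ on $\partial E$, and I would use the pointwise identity $\Div(DF/|DF|)=\Delta F/|DF|-\langle D^2F\,DF,DF\rangle/|DF|^3$.

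The computation rests on three consequences of the $0$-homogeneity of $g$, with $x=y/|y|$ throughout. First, Euler's relation gives $\langle Dg,y\rangle=0$, so $Dg$ is tangent to the spheres centered at the origin and $Dg=|y|^{-1}\nabla h$, whence $|Dg|^2=|y|^{-2}|\nabla h|^2$; differentiating Euler's relation yields $D^2g\,y=-Dg$, i.e. $D^2g\,x=-|y|^{-1}Dg$. Second, $\Delta g=|y|^{-2}\Delta_{\S^{n-1}}h$, the standard relation between the Euclidean Laplacian of a $0$-homogeneous function and the spherical Laplacian of its trace. From $DF=x-Dg$ and $Dg\perp x$ one gets $|DF|^2=1+|Dg|^2$, which on $\{|y|=h\}$ equals $(h^2+|\nabla h|^2)/h^2$.

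For the first term, $\Delta F=\Delta|y|-\Delta g=(n-1)/|y|-|y|^{-2}\Delta_{\S^{n-1}}h$; dividing by $|DF|$ and evaluating at $|y|=h$ produces exactly $(-\tfrac1h\Delta_{\S^{n-1}}h+n-1)/\sqrt{|\nabla h|^2+h^2}$. For the second term I would expand $\langle D^2F\,DF,DF\rangle$ using $D^2F=|y|^{-1}(I-x\otimes x)-D^2g$ and $DF=x-Dg$. Here $(I-x\otimes x)DF=-Dg$, while the identity $D^2g\,x=-|y|^{-1}Dg$ kills $\langle x,D^2g\,x\rangle$ and turns the cross terms into multiples of $|Dg|^2/|y|$; together with $\langle Dg,D^2g\,Dg\rangle=\tfrac12\langle D(|Dg|^2),Dg\rangle$ everything collapses to $\langle D^2F\,DF,DF\rangle=-|Dg|^2/|y|-\tfrac12\langle D(|Dg|^2),Dg\rangle$.

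The one genuinely delicate point — and the step I expect to be the main obstacle — is translating the Euclidean quantity $\tfrac12\langle D(|Dg|^2),Dg\rangle$ into the purely spherical $\tfrac12\langle\nabla|\nabla h|^2,\nabla h\rangle$ appearing in the statement. Indeed $|Dg|^2=|y|^{-2}\Phi$, where $\Phi$ is the $0$-homogeneous extension of $|\nabla h|^2$, so $|Dg|^2$ is $(-2)$-homogeneous and $D(|Dg|^2)$ acquires a radial component $-2|y|^{-3}\Phi\,x$. Pairing with $Dg\perp x$ annihilates this radial part, leaving only the tangential contribution $|y|^{-4}\langle\nabla|\nabla h|^2,\nabla h\rangle$; bookkeeping of these homogeneity weights is where care is needed. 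Substituting $|Dg|^2=|\nabla h|^2/h^2$ and $|DF|^3=(h^2+|\nabla h|^2)^{3/2}/h^3$ at $|y|=h$ and simplifying $-\langle D^2F\,DF,DF\rangle/|DF|^3$ then matches the second summand of \eqref{eq:curv3}. As a sanity check, for $h\equiv r$ one has $\nabla h=0$ and $\Delta_{\S^{n-1}}h=0$, and the formula returns $H=(n-1)/r$, the correct mean curvature of $\pa B_r$.
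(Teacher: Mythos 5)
Your proposal is correct, and it takes a genuinely different route from the paper's. Both arguments hinge on the same device --- the $0$-homogeneous extension of $h$ and the Euler identities $\langle Dg,y\rangle=0$, $D^2g\,y=-Dg$ --- but the paper computes $H_{\pa E}=\Div\nu$ by pulling the derivative back to the unit sphere through the chain rule for the map $y\mapsto x=y/h(y)$, shows that the extra terms cancel so that $H_{\pa E}(xh(x))=\tfrac1h\Div_\tau\nu(x)$, a tangential divergence on $\S^{n-1}$, and then expands that intrinsic expression. You instead realize $\pa E$ as the zero level set of $F(y)=|y|-g(y)$ and invoke the standard level-set identity $\Div(DF/|DF|)=\Delta F/|DF|-\langle D^2F\,DF,DF\rangle/|DF|^3$; this is legitimate here because $|DF|^2=1+|Dg|^2\geq 1$, so $DF/|DF|$ is a $C^1$ \emph{unit} extension of the exterior normal in a whole neighborhood of $\pa E$, and for such an extension $\Div\nu=\Div_\tau\nu=H_{\pa E}$ as the paper itself notes. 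I verified your algebra: $(I-x\otimes x)DF=-Dg$ together with $D^2g\,x=-|y|^{-1}Dg$ gives $\langle D^2F\,DF,DF\rangle=-|Dg|^2/|y|-\tfrac12\langle D(|Dg|^2),Dg\rangle$, and your homogeneity bookkeeping (the radial part of $D(|Dg|^2)$, coming from the $(-2)$-homogeneity, is annihilated by $Dg\perp x$) correctly yields $\tfrac12\langle D(|Dg|^2),Dg\rangle=\tfrac12|y|^{-4}\langle\nabla|\nabla h|^2,\nabla h\rangle$; substituting $|y|=h$, $|Dg|^2=|\nabla h|^2/h^2$ and $|DF|^3=(h^2+|\nabla h|^2)^{3/2}/h^3$ reproduces exactly both summands of \eqref{eq:curv3}. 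As for what each route buys: your computation replaces the paper's chain-rule manipulation and its cancellation step by a chain of standard Euclidean identities, each mechanically checkable, at the price of tracking homogeneity degrees; the paper's reduction to $\tfrac1h\Div_\tau\nu$ is more geometric, stays intrinsic to the sphere, and makes the structure of the resulting formula more transparent.
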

\begin{proof}
First, we extend $h$ to $\R^n\setminus \{0\}$ as a homogeneous function of degree $0$ still denoted by $h$. Note that with this definition for any $x\in \S^{n-1}$ the tangential gradient of $h$ at $x$ coincides with the gradient of $h$ at the same point.
Note also that the exterior normal to $\pa E$ at $xh(x)$, for $x\in \S^{n-1}$, is given by
\[
\nu  (xh(x))= \frac{xh(x)- \nabla h(x)}{\sqrt{h^2(x)+\nabla h(x)|^2} },
\]
 where we have set $\nu = \nu_{\pa E}$. 
Thus, setting $y=xh(x)$ and recalling \eqref{eq:defnH} we have
\[
H_{\pa E} (y)= \Div \nu(y)=  \frac{\pa \nu_i}{\pa y_i}(y)=  \frac{\pa \nu_i}{\pa x_j} (y) \frac{\pa x_j}{\pa y_i}(y),
\]
where we have adopted the standard convention of summation over repeated indexes.
Since the derivatives of $h$ are homogeneous of degree $-1$ we have
\[
\frac{\pa x_j}{\pa y_i}= \frac{\pa }{\pa y_i} \frac{y_j}{h(y)}=
\frac{\delta_{ij}}{h(x)} - \frac{x_j}{h^2(x)} \frac{\pa h}{\pa x_i}(x). 
\]
Hence 
\[
H_{\pa E} (xh(x))= \frac{1}{h(x)} \Div \left(\frac{x h(x) -\nabla h (x)}{\sqrt{h^2+|\nabla h|^2}}\right)- \frac{\pa \nu_i}{\pa x_j} \frac{x_j}{h^2} \frac{\pa h}{\pa x_i}.
\]
Denoting by $\Div_\tau$ the tangential divergence on $\S^{n-1}$ we have
\[\begin{split}
H_{\pa E} (xh(x))&=
 \frac{1}{h(x)} \Div \left(\frac{x h(x) -\nabla h (x)}{\sqrt{h^2+|\nabla h|^2}}\right)= \frac{1}{h(x)} \Div_\tau \nu(x) + \frac{1}{h} \frac{\pa \nu_i}{\pa x_j}x_ix_j - \frac{\pa \nu_i}{\pa x_j} \frac{x_j}{h^2} \frac{\pa h}{\pa x_i}\\&
 =
 \frac{1}{h(x)} \Div_\tau \nu(x) +  \frac{1}{h^2} (h^2 +|\nabla h|^2)^\frac12  \frac{\pa \nu_i}{\pa x_j} \nu_i x
 =  \frac{1}{h(x)} \Div_\tau \nu(x) .
 \end{split}
\]
Then, since $\langle x,\nabla h\rangle =0$, an easy calculation gives \eqref{eq:curv3}.
\end{proof}
We conclude by proving that in higher dimension if $r>\sqrt{n-2}$ then the ball $B_r$ is a local maximizer of the integral of the weighted mean curvature with respect to $C^2$ perturbations. Quite surprisingly, the ball $B_r$ is a local minimizer if $r$ is small enough.
\begin{thm} \label{teo:localmax}
For all $r>0$ there exist $\e_0(r),C(r)>0$ with the property that if $u\in W^{2,\infty}(\S^{n-1})$, $\|u\|_{W^{2,\infty}}\leq \e<\e_0$ 
 and
 $E=\{trx(1+u(x)),\,\, x\in \S^{n-1},\, t\in (0,1)\}$ is such that $\gamma(E)=\gamma(B_r)$ 
 then 
 \beq \label{eq:fug3}
\He(B_r)-\He(E) \geq r^{n-2}e^{-\frac{r^2}{2}} \left(r^2-n+2-C\e_0\right)\|u\|_{W^{1,2}(\S^{n-1})}.
\eeq
Moreover, if $E=-E$ 
\beq \label{eq:fug4}
\He(B_r)-\He(E)
\leq r^{n-2}e^{-\frac{r^2}{2}}
\left(C\e +r^2-(n-2)\frac{n-1}{2n}\right)\|u\|_{W^{1,2}(\S^{n-1})}.
\eeq
\end{thm}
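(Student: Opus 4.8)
The plan is to reduce both inequalities to a single second-order Taylor expansion of $\He$ about $B_r$, and then to read off the two constants $n-2$ and $\tfrac{(n-1)(n-2)}{2n}$ from the spectrum of the Laplace--Beltrami operator on $\S^{n-1}$. Writing $h(x)=r(1+u(x))$ so that $\pa E=\{xh(x):x\in\S^{n-1}\}$, I would first turn $\He(E)$ into an integral over the fixed sphere by combining the area element $d\H^{n-1}=h^{n-2}\sqrt{h^2+|\nabla h|^2}\,d\sigma$ with the curvature formula \eqref{eq:curv3}. The virtue of this pairing is that the square roots cancel in the product $H_{\pa E}\sqrt{h^2+|\nabla h|^2}$, so that
\[
\He(E)=\int_{\S^{n-1}}\Big(-\tfrac1h\Delta_{\S^{n-1}}h+(n-1)+Q(h)\Big)h^{n-2}e^{-h^2/2}\,d\sigma,
\]
where $Q(h)$ gathers the last terms in \eqref{eq:curv3} and is quadratic in $\nabla h$.

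Next I would expand the integrand to second order in $u$, controlling the remainder by $\e\|u\|_{W^{1,2}}^2$ through the bound $\|u\|_{W^{2,\infty}}\le\e$. Three sources of quadratic terms appear: the quasilinear part $-\tfrac1h\Delta h=-\Delta u/(1+u)$ contributes $-\Delta u+u\Delta u$; the remainder $Q(h)$ contributes $|\nabla u|^2$ at leading order; and the weight-area factor expands as $h^{n-2}e^{-h^2/2}=r^{n-2}e^{-r^2/2}\big(1+(n-2-r^2)u+c_2u^2+\cdots\big)$ with $c_2=\tfrac12\big((n-2)(n-3)-(2n-3)r^2+r^4\big)$. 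The zeroth-order term reproduces $\He(B_r)=(n-1)r^{n-2}e^{-r^2/2}\H^{n-1}(\S^{n-1})$. The one delicate point is the first-order term $(n-1)(n-2-r^2)\int_{\S^{n-1}}u\,d\sigma$: although linear, the constraint $\gamma(E)=\gamma(B_r)$ promotes it to second order, since expanding $s\mapsto\int_0^s t^{n-1}e^{-t^2/2}\,dt$ at $s=r$ gives $\int_{\S^{n-1}}u\,d\sigma=\tfrac{r^2-(n-1)}2\int_{\S^{n-1}}u^2\,d\sigma+o(\|u\|^2)$, so it must be kept.

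Collecting all quadratic contributions and integrating by parts on $\S^{n-1}$ (using $\int u\Delta u=-\int|\nabla u|^2$), I expect every $r^2$- and $r^4$-term in the coefficient of $\int u^2$ to cancel between $c_2$ and the constraint term, leaving the $r$-independent value $(n-1)(n-2)$, so that
\[
\He(B_r)-\He(E)=r^{n-2}e^{-r^2/2}\Big((n-1)(n-2)\!\int_{\S^{n-1}}\!u^2+(r^2-n+2)\!\int_{\S^{n-1}}\!|\nabla u|^2\Big)+O(\e)\|u\|_{W^{1,2}}^2.
\]
Both bounds are then consequences of a spectral gap inequality on the sphere, the two cases differing only in which gap is available. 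For \eqref{eq:fug3} the constraint makes $u$ orthogonal to constants to leading order, so $\int|\nabla u|^2\ge(n-1)\int u^2$ (first eigenvalue $\lambda_1=n-1$); rewriting the form as $(r^2-n+2)\|u\|_{W^{1,2}}^2+(n(n-2)-r^2)\int u^2$ and using $(n-1)(n-2)\ge0$ then yields the lower bound. For \eqref{eq:fug4} the hypothesis $E=-E$ forces $u$ to be even, hence also orthogonal to the odd first spherical harmonics, so the gap improves to $\lambda_2=2n$ and $\int|\nabla u|^2\ge2n\int u^2$; substituting this into the same quadratic form gives the upper bound with the sharper constant $r^2-\tfrac{(n-1)(n-2)}{2n}$.

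The step I expect to be the main obstacle is the second-order expansion itself, and specifically the simultaneous bookkeeping of the three sources of quadratic terms together with the constraint-induced term. Getting the coefficient of $\int u^2$ to collapse exactly to $(n-1)(n-2)$ is what fixes both thresholds: a single misplaced $r^2$- or $r^4$-term would shift $n-2$ in \eqref{eq:fug3} and $\tfrac{(n-1)(n-2)}{2n}$ in \eqref{eq:fug4}. By contrast, the passage from the general to the symmetric case is the conceptually clean and decisive step, being entirely encoded in the replacement of the spectral gap $n-1$ by $2n$.
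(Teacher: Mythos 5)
Your proposal is correct and follows the paper's own proof essentially step for step: the same parametrization $h=r(1+u)$ inserted into \eqref{eq:curv3}, the same second-order Taylor expansion in which the Gaussian volume constraint promotes the linear term via $\int_{\S^{n-1}} u\,d\H^{n-1}=-\frac{n-1-r^2}{2}\int_{\S^{n-1}} u^2\,d\H^{n-1}+o(\|u\|_{L^2}^2)$, the same cancellation of all $r^2$- and $r^4$-terms leaving the coefficient $(n-1)(n-2)$ (this is exactly the paper's \eqref{eq:fug2}), and the same spherical-harmonics spectral-gap argument (gap $n-1$ in general, improved to $2n$ when $E=-E$ forces $u$ even) yielding \eqref{eq:fug3} and \eqref{eq:fug4}. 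If anything, your explicit rewriting of the quadratic form as $(r^2-n+2)\|u\|^2_{W^{1,2}}+(n(n-2)-r^2)\int u^2$ makes the final step of \eqref{eq:fug3} more transparent than the paper's ``we immediately conclude.''
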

\begin{proof}
To prove our statement we use \eqref{eq:curv3} with $h$ replaced by $r(1+u)$, thus getting
\[\begin{split}
&\He(E)= \int_{\pa E} H_{\pa E}\, d\H^{n-1}
\\
&=\int_{\S^{n-1}} 
\left[(n-1)-\left(\frac{\Delta u }{1+u}\right)
+
\left(\frac{\langle\nabla^2 u \nabla u,\nabla u\rangle +(1+u)|\nabla u|^2}{(1+u)(|\nabla u|^2 +(1+u)^2)}\right)\right]\frac{r^{n-2}e^{-\frac{r^2}{2}(1+u)^2}}{((1+u))^{2-n}}\, d\H^{n-1}.
\\
&=r^{n-2} \left[(n-1)I -J+K        \right].
\end{split}
\]

By Taylor expansion and the smallness of $u$ we get
\[
\begin{split}
I=&
 \int_{\S^{n-1}}(1+u)^{n-2}  e^{-\frac{r^2}{2}(1+u)^2}d\H^{n-1}
\\
=&e^{-\frac{r^2}{2}}\left(n\omega_n 
+((n-2)-r^2)\int_{\S^{n-1}} u  \,d\H^{n-1}\right)
\\
&+e^{-\frac{r^2}{2}}\left(\frac{(n-2)(n-3)-(2n-3)r^2
+r^4)}{2} \int_{\S^{n-1}}  u^2 d\H^{n-1}\right) +o(\|u\|^{2}_{L^2(\S^{n-1})})
\end{split}
\]
and
\[
\begin{split}
    e^{\frac{r^2}{2}}\int_{\S^{n-1}}(1+u)^{n-3}& \Delta u e^{-\frac{r^2(1+u)^2}{2}}\, d\H^{n-1}\\
    =&\int_{\S^{n-1}}\Delta u d\H^{n-1}
    +(n-3-r^2)\int_{\S^{n-1}}u\Delta u\, d\H^{n-1}
    \\
    &+ \int_{\S^{n-1}} u^2\Delta u G(u)\,d\H^{n-1},
\end{split}
\]
where $G(u)$ contains the remainder in the Taylor expansion. 
Using that $\int \Delta u \,d\H^{n-1}=0$ and integrating by parts the terms involving the Laplace-Beltrami operator we infer
\[
J=\int_{\S^{n-1}}(1+u)^{n-3} \Delta u e^{-\frac{r^2(1+u)^2}{2}}=
-e^{-\frac{r^2}{2}}(n-3-r^2)\int_{\S^{n-1}}|\nabla u|^2 d\H^{n-1} +o(\|u\|^2_{W^{1,2}(\S^{n-1})}).
\]
The last term is actually easier to treat since we are assuming also the smallness of the Hessian of $u$. Thus we have 
\[
\begin{split}K=&
\int_{\S^{n-1}} \frac{ \langle \nabla^2 u, \nabla u,\nabla u\rangle+(1+u) |\nabla u|^2 }{(1+u)^{1-n}((1+u)^2+|\nabla u|^2)}
 e^{-\frac{r^2}{2  }(1+u)^2}\,
d\H^{n-1}
\\=&
e^{-\frac{r^2}{2}}(1+o(\|u\|^2_{W^{1,2}(\S^{n-1})}))\int_{\S^{n-1}}\langle \nabla^2 u, \nabla u,\nabla u \rangle+|\nabla u|^2\, d\H^{n-1}.
\end{split}
\]
Collecting all the previous equalities we then get
\beq\begin{split}\label{eq:fug0}
\He(E)&-\He(B_r)
=
r^{n-2}e^{-\frac{r^2}{2}}(n-1)\left(((n-2)-r^2)\int_{\S^{n-1}} u  \,d\H^{n-1}\right)\\
&+r^{n-2}e^{-\frac{r^2}{2}}(n-1)\left(\frac{(n-2)(n-3)-(2n-3)r^2
+r^4)}{2} \int_{\S^{n-1}}  u^2 d\H^{n-1}\right)
\\
&+r^{n-2}e^{-\frac{r^2}{2}}\left((n-2-r^2)\int_{\S^{n-1}}|\nabla u|^2 d\H^{n-1}+\int_{\S^{n-1}}\langle \nabla^2 u, \nabla u,\nabla u \rangle\, d\H^{n-1} \right).
\end{split}
\eeq
To estimate the integral of $u$ in the previous equation we need to exploit the assumption that the Gaussian measures of $E$ and $B_r$ are equal. 
In fact, since
\beq
\gamma(B_r)=\gamma(E) 
=\frac{r^{n}}{(2\pi)^{n/2}}\int_B(1+u(x))^ne^{-\frac{r^2|x|^2(1+u(x))^2}{2}}\,dx,
\eeq
we can expand the integral via Taylor formula to find
$$
\int_0^1t^{n-1}\,dt\int _{\mathbb{S}^{n-1}}\Big[(1+u)^ne^{-\frac{r^2t^2(1+u)^2}{2}}-e^{-\frac{r^2t^2}{2}}\Big]\,d\H^{n-1}=0.
$$
Using again Taylor expansion, we then easily get
\beq \label{eq:fug}
\begin{split}
&0 
= \int_0^1t^{n-1}e^{-\frac{r^2t^2}{2}}\,dt\int _{\mathbb{S}^{n-1}}\big[(1+u)^ne^{-r^2t^2(u+u^2/2)}-1\big]\,d\H^{n-1} \\
&= \int_0^1t^{n-1}e^{-\frac{r^2t^2}{2}}\,dt\int _{\mathbb{S}^{n-1}}\Big[(n-r^2t^2)u+\Big(\frac{n(n-1)}{2}-\frac{(2n+1)r^2t^2}{2}+\frac{r^4t^4}{2}\Big)u^2\Big]\,d\H^{n-1}\\
&\qquad\qquad\qquad\qquad\qquad\qquad\qquad\qquad\qquad\qquad\qquad\qquad +o(\|u\|^2_{L^2(\S^{n-1})})  \\
&= \int _{\mathbb{S}^{n-1}}\Big[(na_n-r^2b_n)u+\Big(\frac{n(n-1)a_n}{2}-\frac{(2n+1)r^2b_n}{2}+\frac{r^4c_n}{2}\Big)u^2\Big]\,d\H^{n-1}
\\&
\qquad\qquad\qquad\qquad\qquad\qquad\qquad\qquad\qquad\qquad\qquad\qquad +o(\|u\|^2_{L^2(\S^{n-1})}) , 
\end{split}
\eeq
where we have set
$$
a_n=\int_0^1t^{n-1}e^{-\frac{r^2t^2}{2}}\,dt,\qquad b_n=\int_0^1t^{n+1}e^{-\frac{r^2t^2}{2}}\,dt,\qquad c_n=\int_0^1t^{n+3}e^{-\frac{r^2t^2}{2}}\,dt.
$$
A simple integration by parts gives that
$$
b_n=\frac{na_n}{r^2}-\frac{e^{-\frac{r^2}{2}}}{r^2},\qquad c_n=\frac{n(n+2)a_n}{r^4}-\frac{(n+2)e^{-\frac{r^2}{2}}}{r^4}-\frac{e^{-\frac{r^2}{2}}}{r^2}.
$$
Thus, inserting the above values of $b_n$ and $c_n$ into \eqref{eq:fug} we arrive at
\begin{equation}\label{eq:fug1}
\int _{\mathbb{S}^{n-1}}u\,d\H^{n-1}=-\frac{n-1-r^2}{2}\int _{\mathbb{S}^{n-1}}u^2\,d\H^{n-1}+o(\|u\|_{L^2(\S^{n-1})}^2).
\end{equation}
Hence, \eqref{eq:fug0} combined with \eqref{eq:fug1} gives
\beq\label{eq:fug2}
\begin{split}
\He(E)-\He(B_r)
=&-r^{n-2}e^{-\frac{r^2}{2}}(n-1)(n-2)\int _{\mathbb{S}^{n-1}}u^2\,d\H^{n-1}
\\
& +(n-2-r^2) r^{n-2}e^{-\frac{r^2}{2}} \int_{\S^{n-1}}|\nabla u|^2 d\H^{n-1}  
\\
&
+r^{n-2}e^{-\frac{r^2}{2}}\int_{\S^{n-1}} \langle \nabla^2 u\nabla u ,\nabla u\rangle \, d\H^{n-1}+
o(\|u\|^2_{W^{1,2}(\S^{n-1})}).
\end{split}
\eeq
From this equality we immediately conclude the proof of \eqref{eq:fug3}.

To prove the second inequality 
for any integer $k \geq 0$ we denote by $y_{k,i}$, $i= 1, \dots, G(n,k)$, the spherical harmonics of order $k$, i.e., the restrictions to $\S^{n-1}$ of the homogeneous harmonic polynomials of degree $k$, normalized so that $|| y_{k,i} ||_{L^2(\S^{n-1})}=1$.
 The functions $y_{k,i}$ are eigenfunctions of the Laplace-Beltrami operator on $\S^{n-1}$ and for all $k$ and $i$
\[
- \Delta_{\S^{n-1}}y_{k,i} = k(k+n-2)y_{k,i}\,.
\]
Therefore if we write 
\[
u = \sum_{k=0}^{\infty} \sum_{i=1}^{G(n,k)}a_{k,i}y_{k,i}, \quad \mbox{where} \quad a_{k,i}= \int_{\S^{n-1}} u y_{k,i} d\H^{n-1},
\]
we have
\begin{equation} \label{eq:armsph}
|| u ||^2_{L^2(\S^{n-1})} = \sum_{k=0}^{\infty} \sum_{i=1}^{G(n,k)}a^2_{k,i}, \quad || D_{\tau}u ||^2_{L^2(\S^{n-1})} = \sum_{k=1}^{\infty}k(k+n-2) \sum_{i=1}^{G(n,k)}a^2_{k,i}\,.
\end{equation}
Note that \eqref{eq:fug} implies
\[
|a_{0}|^2=\left|\int_{\S^{n-1}} u\,d\H^{n-1}\right|^2 = o(\|u\|_{L^2(\S^{n-1})}).
\]
Since $E=-E$ we also have that $u$ is an even function, hence $a_{2k+1,i}=0$ for all $k \in \mathbb{N}$ and $i \in \{1,\dots, G(2k+1,n)\}$. Hence we can write
\[
\|\nabla u\|^2\geq 2n\|u\|_{L^2(\S^{n-1})} -o(\|u\|^2_{L^2(\S^{n-1})})
\]
which finally gives
\[\begin{split}
    \int_{\pa E} H_{\pa E}e^{-\frac{|x|^2}{2}}\, d\H^{n-1} -&\int_{\pa B_r} H_{\pa B_r}e^{-\frac{|x|^2}{2}}\, d\H^{n-1}\\
    &\geq r^{n-2}e^{-r^2}\left((n-2)\frac{n+1}{2n}        -r^2-\e_0\right)\|\nabla u\|_{L^2(\S^{n-1})}.
\end{split}
\]
\end{proof}
The next result is a local maximality results under weaker assumptions. To this aim we introduce the function
\[
\psi (s)= \frac{1}{\sqrt{2\pi}} \int_{-\infty}^s e^{-\frac{t^2}{2}}\, dt,
\]
which is the value of the Gaussian volume of the half space $H_s=\{x\in \R^n : x_1 \leq s\}$.

\begin{thm}  \label{teo:localmax1}
Let $n\geq 3$, $M>0$ and $m \geq \max \{\psi (2M), \psi (\sqrt {n-2}) \}$.
For any $C^2$ convex set containing the origin with $\gamma(E)=\gamma(B_r)=m$ and $\|H_{\pa E}\|_{L^\infty} \leq M$ it holds
\beq\label{eq:localmax1}
\He(E) \leq \He (B_r).
\eeq
Moreover, if $m\geq \psi(\sqrt{n-2})$ 
\beq \label{eq:localmax3}
\int_{\pa E}  \frac{\langle x,\nu\rangle}{|x|}   H_{\pa E}  e^{-\frac{|x|^2}{2}}\leq \int_{\pa B_r}  \frac{\langle x,\nu\rangle}{|x|}   H_{\pa B_r}  e^{-\frac{|x|^2}{2}}
\eeq
for any convex set $E$ containing the origin with $\He(E)<\infty$ and $\gamma(E)=\gamma(B_r)=m.$
\end{thm}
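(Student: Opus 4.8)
The plan is to establish the second inequality \eqref{eq:localmax3} first — it is the core calibration estimate — and then derive \eqref{eq:localmax1} from it using the curvature bound. The first thing I would isolate is the geometric role of the two thresholds: for a convex $E$ containing the origin, $\gamma(E)=m\ge\psi(s)$ forces $B_s\subseteq E$. Indeed, if some $y$ with $|y|<s$ were outside $\overline E$, a separating hyperplane would give $E\subseteq\{\langle\,\cdot\,,\omega\rangle\le c\}$ with $c<\langle y,\omega\rangle\le|y|<s$, whence $\gamma(E)\le\psi(c)<\psi(s)$, a contradiction. Thus $m\ge\psi(\sqrt{n-2})$ yields $B_{\sqrt{n-2}}\subseteq E$ (in particular $0\in\operatorname{int}E$ and $|x|\ge\sqrt{n-2}$ on $\pa E$), while $m\ge\psi(2M)$ yields $B_{2M}\subseteq E$; this is precisely what the two conditions on $m$ buy us.

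For \eqref{eq:localmax3} the calibration I would use is the radial gradient field $V(x)=\frac{x}{|x|}e^{-\frac{|x|^2}{2}}=\nabla\Phi$, where $\Phi(x)=\int_0^{|x|}e^{-\frac{s^2}{2}}\,ds$ equals $\sqrt{2\pi}\,\psi(|x|)$ up to a constant (which is why $\psi$ governs the problem). Since $\langle V,\nu\rangle=\frac{\langle x,\nu\rangle}{|x|}e^{-\frac{|x|^2}{2}}$, the divergence theorem on $\pa E$ gives $\int_{\pa E}\frac{\langle x,\nu\rangle}{|x|}H_{\pa E}e^{-\frac{|x|^2}{2}}\,d\H^{n-1}=\int_{\pa E}\Div_\tau V\,d\H^{n-1}$, and a direct computation of $\Div_\tau V=\Div V-\langle DV\nu,\nu\rangle$ gives, with $p:=\langle x,\nu\rangle$,
\[
\Div_\tau V=(n-1)\frac{e^{-\frac{|x|^2}{2}}}{|x|}-\frac{(|x|^2+1)(|x|^2-p^2)}{|x|^3}e^{-\frac{|x|^2}{2}}.
\]
I would then split $\frac1{|x|}=\frac{p}{|x|^2}+\frac{|x|-p}{|x|^2}$ and apply the divergence theorem in $E$ to $\frac{x}{|x|^2}e^{-\frac{|x|^2}{2}}$; here $n\ge3$ is essential, since the flux of this field through a small sphere about the origin vanishes and no boundary term appears. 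This converts the radial part into the bulk integral $\int_E\frac{n-2-|x|^2}{|x|^2}e^{-\frac{|x|^2}{2}}\,dx$, whose integrand is radially decreasing, so the bathtub principle together with $\gamma(E)=\gamma(B_r)$ bounds it by its value on $B_r$, namely $\frac1{n-1}\He(B_r)$. Collecting everything I expect the exact identity
\[
\He(B_r)-\int_{\pa E}\frac{\langle x,\nu\rangle}{|x|}H_{\pa E}e^{-\frac{|x|^2}{2}}\,d\H^{n-1}=(n-1)I+D,
\]
with $I\ge0$ the bathtub defect and $D=\int_{\pa E}\frac{|x|-p}{|x|^3}\big[(|x|^2+1)(|x|+p)-(n-1)|x|\big]e^{-\frac{|x|^2}{2}}\,d\H^{n-1}$. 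On $\pa E$ one has $|x|^2+1\ge n-1$ and $p\ge0$, so the bracket is nonnegative, $D\ge0$, and \eqref{eq:localmax3} follows.

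To obtain \eqref{eq:localmax1} I would split $\He(E)=\int_{\pa E}\frac{p}{|x|}H_{\pa E}e^{-\frac{|x|^2}{2}}+\int_{\pa E}\big(1-\frac{p}{|x|}\big)H_{\pa E}e^{-\frac{|x|^2}{2}}$ and use the identity above to write
\[
\He(B_r)-\He(E)=(n-1)I+D-\int_{\pa E}\Big(1-\frac{p}{|x|}\Big)H_{\pa E}e^{-\frac{|x|^2}{2}}\,d\H^{n-1}.
\]
The subtracted integral is nonnegative, so the whole point is to dominate it by the nonnegative defect $(n-1)I+D$. Bounding $H_{\pa E}\le M$ and factoring out $|x|-p\ge0$ reduces the required inequality, in its crudest (pointwise, $I$ discarded) form, to $M|x|^2\le(|x|^2+1)(|x|+p)-(n-1)|x|$ on $\pa E$; taking the worst case $p=0$ this is the quadratic condition $|x|^2-M|x|-(n-2)\ge0$, which holds once $|x|\ge\frac12\big(M+\sqrt{M^2+4(n-2)}\big)$.

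The step I expect to be the main obstacle is exactly this last matching of constants. The crude pointwise bound closes only when $\max\{2M,\sqrt{n-2}\}$ exceeds the critical radius $\frac12\big(M+\sqrt{M^2+4(n-2)}\big)$, i.e. for $M$ not too small; in the complementary small-curvature regime the portion of $\pa E$ inside the critical radius produces a genuine deficit that must be absorbed by the bathtub term $(n-1)I$. Turning this into a rigorous estimate — producing a quantitative lower bound on $I$ that compensates that deficit — is the one delicate point, everything else being the divergence-theorem computation, the separation lemma and the bathtub principle. A minor technical caveat is that \eqref{eq:localmax3} is stated for merely convex $E$ with $\He(E)<\infty$: there I would apply the manifold divergence theorem to smooth convex approximations $E_h\to E$ and pass to the limit through the curvature measure, exactly as in the Remark after Theorem \ref{teo:2d}.
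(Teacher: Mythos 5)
Your argument is essentially the paper's own proof, traced with more care: the same halfspace/touching argument giving $B_{\psi^{-1}(m)}\subseteq E$, the same calibration field $\frac{x}{|x|}e^{-\frac{|x|^2}{2}}$ combined with the divergence theorem on manifolds, the same bulk divergence theorem applied to $\frac{x}{|x|^2}e^{-\frac{|x|^2}{2}}$ (with $n\geq 3$ ensuring no contribution from the origin), and the same bathtub comparison with $B_r$. Your only structural difference is that you keep an exact identity $\He(B_r)-\int_{\pa E}\frac{\langle x,\nu\rangle}{|x|}H_{\pa E}e^{-\frac{|x|^2}{2}}\,d\H^{n-1}=(n-1)I+D$ rather than a chain of inequalities, which is a mild sharpening. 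Your proof of \eqref{eq:localmax3} is complete and correct: $|x|\geq\sqrt{n-2}$ on $\pa E$ gives $|x|^2+1\geq n-1$, hence $D\geq 0$ since $\langle x,\nu\rangle\geq 0$ by convexity; this is exactly the paper's condition $\frac{n-2}{|x|}-|x|\leq 0$. (Your closing caveat about merely convex $E$, handled by smooth approximation, addresses a point the paper glosses over.)

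The gap you flag in \eqref{eq:localmax1} is genuine, but it is not a defect of your argument relative to the paper's: it is a defect of the paper itself. Your pointwise condition $|x|^2-M|x|-(n-2)\geq 0$ is identical to the one the paper needs: its integrand satisfies $H_{\pa E}+\frac{n-2}{|x|}-|x|\leq M+\frac{n-2}{|x|}-|x|$, which is nonpositive exactly under that quadratic inequality. The paper disposes of it by asserting that $m\geq\max\{\psi(2M),\psi(\sqrt{n-2})\}$ implies $r_E\geq\max\{2M,\sqrt{2(n-2)}\}$, which is false as written: since $\psi$ is increasing, the stated hypothesis yields only $r_E\geq\max\{2M,\sqrt{n-2}\}$, and, as you computed, $\max\{2M,\sqrt{n-2}\}$ exceeds the critical radius $\frac12\bigl(M+\sqrt{M^2+4(n-2)}\bigr)$ only when $M\geq\sqrt{(n-2)/2}$. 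So in the small-curvature regime neither your proof nor the paper's closes under the hypothesis as stated. Rather than pursuing a quantitative lower bound on the bathtub defect $I$ to absorb the deficit (a genuinely new and delicate argument, nowhere present in the paper), the economical fix is to read the hypothesis as $m\geq\max\{\psi(2M),\psi(\sqrt{2(n-2)})\}$, which is what the paper's proof tacitly uses: then every $x\in\pa E$ satisfies $M|x|\leq\frac{|x|^2}{2}$ and $n-2\leq\frac{|x|^2}{2}$, so the quadratic condition holds pointwise and your proof of \eqref{eq:localmax1} is complete with no further input.
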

\begin{proof}
Let
$E$ as in the statement and let $r_E$ the radius of the largest ball centered at the origin and contained in $E$, i.e.
\beq \label{eq:defrE}
r_E= \sup\{r: B_r \subset E\}. 
\eeq
Let $x \in \pa B_{r_E}\cap \pa E$ and let $H$ be the halfspace containing the origin and such that the hyperplane $\pa H$ is tangent to $E$ at $x$. Since by convexity $E\subset H$ we have $\psi (r_E)=\gamma(H)\geq m$, hence $r_E\geq \psi^{-1}(m)$. Therefore, our
 assumption on $m$ implies $ r_E \geq \max\{ 2M, \sqrt{2(n-2)}\}$.
Now, using the divergence theorem on mainfolds we infer
\[\begin{split}
\He (E)&= \int_{\pa E}H_{\pa E} \frac{\langle x,\nu\rangle}{|x|} e^{-\frac{|x|^2}{2}}\, d\H^{n-1} +\int_{\pa E} H_{\pa E} \left(1-\frac{\langle x,\nu\rangle}{|x|}\right) e^{-\frac{|x|^2}{2}}\, d\H^{n-1}
\\
&=
\int_{\pa E}\Div_\tau\left( \frac{x}{|x|} e^{-\frac{|x|^2}{2}}\right)\, d\H^{n-1} +\int_{\pa E} H_{\pa E} \left(1-\frac{\langle x,\nu\rangle}{|x|}\right) e^{-\frac{|x|^2}{2}}\, d\H^{n-1}.
\end{split}
\]
We compute the tangential divergence to find
\[
\Div_\tau\left( \frac{x}{|x|} e^{-\frac{|x|^2}{2}}\right)= \frac{n-1}{|x|}e^{-\frac{|x|^2}{2}} - \left(1-\frac{\langle x,\nu\rangle^2}{|x|^2} \right)\left(\frac{1}{|x|} + |x|\right) e^{-\frac{|x|^2}{2}}.
\]
This gives 
\beq\label{eq:maximality}
\begin{split}
\He (E)=&\int_{\pa E} \frac{n-1}{|x|}e^{-\frac{|x|^2}{2}} \, d\H^{n-1}
\\&+\int_{\pa E}\left(1-\frac{\langle x,\nu\rangle}{|x|} \right)\left( H_{\pa E} -\left( \frac{1}{|x|} + |x|\right) \left(1+ \frac{\langle x,\nu\rangle }{|x|}\right)\right) e^{-\frac{|x|^2}{2}}\, d\H^{n-1}
\\
=&\int_{\pa E} \frac{n-1  }{|x|^2} \langle x,\nu\rangle e^{-\frac{|x|^2}{2}} \, d\H^{n-1}
\\
&+\int_{\pa E}\left(1-\frac{\langle x,\nu\rangle}{|x|} \right)\left( H_{\pa E} +\frac{n-1}{|x|}-\left( \frac{1}{|x|} + |x|\right) \left(1+ \frac{\langle x,\nu\rangle }{|x|}\right)\right) e^{-\frac{|x|^2}{2}}\, d\H^{n-1}
 \end{split}
 \eeq
Note that, differently from the two dimensional case, the integral quantity
$\int_{\pa E} \frac{n-1  }{|x|^2} \langle x,\nu\rangle e^{-\frac{|x|^2}{2}} \, d\H^{n-1}$
 is maximized by the ball centered at the origin with the same Gaussian volume of $E$.
  Indeed, using the divergence theorem
\beq\label{eq:maximality1}
\begin{split}
\int_{\pa E} \frac{1}{|x|^2} \langle x,\nu\rangle e^{-\frac{|x|^2}{2}} \, d\H^{n-1}
=& \int_{E} \Div \left( \frac{x}{|x|^2} e^{-\frac{|x|^2}{2}} \, dx\right) \\
=& \int_{E} \frac{n-2}{|x|^2} e^{-\frac{|x|^2}{2}} \,dx -(2\pi)^{\frac{n}{2}}\gamma(E)\\
\leq& \int_{B_r} \frac{n-2}{|x|^2} e^{-\frac{|x|^2}{2}} \,dx -(2\pi)^{\frac{n}{2}}\gamma(B_r)\\
=& \int_{\pa B_r} \frac{1}{|x|}  e^{-\frac{|x|^2}{2}} \, d\H^{n-1}= \frac{1}{n-1} \He(B_r).
\end{split}
\eeq
Since $E$ is a convex set containing the origin, we have that $\langle x,\nu \rangle \geq 0$
for all $x\in \pa E$.
This fact together with \eqref{eq:maximality} and \eqref{eq:maximality1} leads to
\beq \label{eq:localmax2}
\He (E) \leq \He(B_r) +\int_{\pa E}\left(1-\frac{\langle x,\nu\rangle}{|x|} \right)\left( H_{\pa E} +\frac{n-2}{|x|} -|x|\right) e^{-\frac{|x|^2}{2}}\, d\H^{n-1}.
\eeq
Since $H_{\pa E} (x)\leq M \leq r_E/2$, $r_E\leq |x|$ for all $x\in \pa E$
the assumption $r_E\geq 2\sqrt{n-2}$ implies that the integrand on the right hands side is negative, which in turn gives \eqref{eq:localmax1}. 
Inequality \eqref{eq:localmax3} is also a consequence of \eqref{eq:localmax2}. Indeed, \eqref{eq:localmax2} implies that if 
$r\geq \psi(\sqrt{n-2})$
\[
\int_{\pa E}  \frac{\langle x,\nu\rangle}{|x|}   H_{\pa E}  e^{-\frac{|x|^2}{2}}\, d\H^{n-1}\leq  \He(B_r)=\int_{\pa B_r}  \frac{\langle x,\nu\rangle}{|x|}   H_{\pa B_r}e^{-\frac{|x|^2}{2}}\, d\H^{n-1}.
\]

\end{proof}

\bibliographystyle{plain}
\bibliography{references}

\begin{thebibliography}{10}

\bibitem{bbj}
M.~Barchiesi, A.~Brancolini, and V.~Julin.
\newblock Sharp dimension free quantitative estimates for the Gaussian
  isoperimetric inequality.
\newblock {\em The Annals of Probability}, 45(2):668--697, 2017.

\bibitem{BaJu}
M.~Barchiesi and V.~Julin.
\newblock Symmetry of minimizers of a Gaussian isoperimetric problem.
\newblock {\em Probability Theory and Related Fields}, 177:217--256, 2019.

\bibitem{Barthe}
F.~Barthe.
\newblock An isoperimetric result for the Gaussian measure and unconditional
  sets.
\newblock {\em Bulletin of the London Mathematical Society}, 33(4):408–416,
  2001.

\bibitem{Bo}
C.~Borell.
\newblock The Brunn-Minkowski inequality in Gauss space.
\newblock {\em Inventiones mathematicae}, 30:207--216, 1975.

\bibitem{CCLP}
A.~Carbotti, S.~Cito, D.A. La~Manna, and D.~Pallara.
\newblock A quantitative dimension free isoperimetric inequality for the
  fractional Gaussian perimeter.
\newblock {\em Accepted Paper: Communications in Analysis and Geometry)}, 2022.

\bibitem{cfmp}
A.~Cianchi, N.~Fusco, F.~Maggi, and A.~Pratelli.
\newblock On the isoperimetric deficit in Gauss space.
\newblock {\em American Journal of Mathematics}, 133(1):131--186, 2011.

\bibitem{DeL}
A.~De~Rosa and D.~A. La~Manna.
\newblock A non local approximation of the Gaussian perimeter: Gamma
  convergence and isoperimetric properties.
\newblock {\em Communications on Pure and Applied Analysis}, 20(5):2101--2116,
  2021.

\bibitem{Fus}
N.~Fusco.
\newblock The quantitative isoperimetric inequality and related topics.
\newblock {\em Bulletin of Mathematical Sciences}, 5(3):517--607, 2015.

\bibitem{FuLa}
N.~Fusco and D.A. La~Manna.
\newblock Some weighted isoperimetric inequalities in quantitative form.
\newblock {\em Journal of Functional Analysis}, 2023.

\bibitem{Heil2}
S.~Heilman.
\newblock Low correlation noise stability of symmetric sets.
\newblock {\em Journal of Theoretical Probability}, 34, 2021.

\bibitem{Heil}
S.~Heilman.
\newblock Symmetric convex sets with minimal Gaussian surface area.
\newblock {\em American Journal of Mathematics}, 143(1):53--94, 2021.

\bibitem{Heil3}
S.~Heilman.
\newblock Convex cylinders and the symmetric Gaussian isoperimetric problem,
  2022.

\bibitem{l}
D.~A. La~Manna.
\newblock Local minimality of the ball for the Gaussian perimeter.
\newblock {\em Advances in Calculus of Variations}, 12:193--210, 2017.

\bibitem{LAM1}
D.A. La~Manna.
\newblock An isoperimetric problem with a Coulombic repulsion and attractive
  term.
\newblock {\em ESAIM: COCV}, 25:14, 2019.

\bibitem{Galyna}
G.~V. Livshyts.
\newblock On a conjectural symmetric version of Ehrhard's inequality.
\newblock {\em arXiv}, 2021.

\bibitem{NPS}
N.~Novaga, D.~Pallara, and Y.~Sire.
\newblock A symmetry result for degenerate elliptic equations on the Wiener
  space with nonlinear boundary conditions and applications.
\newblock {\em Discrete and Continuous Dynamical Systems - S}, 9(3):815--831,
  2016.

\bibitem{Sc}
R.~Schneider.
\newblock {\em Convex bodies: the {B}runn-{M}inkowski theory}, volume 151 of
  {\em Encyclopedia of Mathematics and its Applications}.
\newblock Cambridge University Press, Cambridge, expanded edition, 2014.

\bibitem{ST}
V.~N. Sudakov and B.~S. Tsirel'son.
\newblock Extremal properties of half-spaces for spherically invariant
  measures.
\newblock {\em Journal of Soviet Mathematics}, 9:9--18, 1978.

\end{thebibliography}

\end{document}